\newtheorem{theorem}{Theorem}[section]
\newtheorem{proposition}[theorem]{Proposition}
\newtheorem{corollary}[theorem]{Corollary}
\newtheorem{lemma}[theorem]{Lemma}
\theoremstyle{definition}
\newtheorem{definition}[theorem]{Definition}
\newtheorem{example}[theorem]{Example}
\newtheorem{question}[theorem]{Question}
\begin{document}

\title{Bounds for DP color function and canonical labelings}

\author{
Ziqing Li\thanks{School of Mathematics, Tianjin University, Tianjin, China: lhy\underline{ }nxz@163.com.}\,
\qquad
Yan Yang\thanks{School of Mathematics, Tianjin University, Tianjin, China: yanyang@tju.edu.cn.
Supported by National Natural Science Foundation of China under Grant 11971346. Corresponding author.}\,
}

\date{}

\maketitle

\begin{abstract}
The DP-coloring is a generalization of the list coloring, introduced by Dvo\v{r}\'{a}k and Postle. Let $\mathcal{H}=(L,H)$ be a cover of a graph $G$ and $P_{DP}(G,\mathcal{H})$ be the number of $\mathcal{H}$-colorings of $G$. The DP color function $P_{DP}(G,m)$ of $G$, introduced by Kaul and Mudrock, is the minimum value of $P_{DP}(G,\mathcal{H})$ where the minimum is taken over all possible $m$-fold covers $\mathcal{H}$ of $G$.
For the family of $n$-vertex connected graphs, one can deduce that trees maximize the DP color function, from two results of Kaul and Mudrock.
In this paper we obtain tight upper bounds for the DP color function of $n$-vertex $2$-connected graphs. Another concern in this paper is the canonical labeling in a cover. It is well known that if an $m$-fold cover $\mathcal{H}$ of a graph $G$ has a canonical labeling, then $P_{DP}(G,\mathcal{H})=P(G,m)$ in which $P(G,m)$ is the chromatic polynomial of $G$. However the converse statement of this conclusion is not always true. We give examples that for some $m$ and $G$, there exists an $m$-fold cover $\mathcal{H}$ of $G$ such that $P_{DP}(G,\mathcal{H})=P(G,m)$, but $\mathcal{H}$ has no canonical labelings. We also prove that when $G$ is a unicyclic graph or a theta graph,  for each $m\geq 3$, if $P_{DP}(G,\mathcal{H})=P(G,m)$, then $\mathcal{H}$ has a canonical labeling.

\medskip

\noindent {\bf Keywords:} DP-coloring; DP color function; $2$-connected graph; ear decomposition; canonical labeling.

\smallskip
\noindent {\bf Mathematics Subject Classification (2020):} 05C15, 05C30, 05C31.

\end{abstract}

\section{Introduction}

All graphs considered in this paper are finite and simple. The set of natural numbers is $\mathbb{N}=\{1,2,3,\dots\}$. For $m\in \mathbb{N}$, let $[m]=\{1,\ldots,m\}$. For any graph $G$, let $V(G)$ and $E(G)$ be its vertex set and edge set respectively. For any $v\in V(G)$, let $N_G(v)$ denote the set of neighbors of $v$ in $G$ and $d_G(v)$ denote the degree of $v$ in $G$. A {\it proper $m$-coloring} of $G$ is a mapping $c: V(G)\rightarrow [m]$ such that $c(u)\neq c(v)$ whenever $uv\in E(G)$. In 1912, Birkhoff \cite{B1912} introduced a function $P(G,m)$ which counts the number of proper $m$-colorings of $G$, it is a polynomial in $m$ and called {\it chromatic polynomial} of $G$. The book by Dong, Koh and Teo~\cite{DKT05} gives an overview for chromatic polynomial problems.

Since it is difficult to get a simple expression for the chromatic polynomial of an arbitrary graph, the bounds for the chromatic polynomials of graphs are of particular interest. For $n$-vertex connected graphs, the upper bound for their chromatic polynomials can be found in \cite{DKT05}.

\begin{theorem}[\cite{DKT05}, Theorem 15.3.2]\label{cp1} Let $G$ be a connected graph with $n$ vertices. Then for all $m\in \mathbb{N}$,
$$P(G, m)\leq m(m-1)^{n-1},$$ where equality holds for $m\ge 3$ if and only if $G$ is a tree.\end{theorem}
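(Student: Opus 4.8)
The plan is to separate the two claims: the inequality $P(G,m)\le m(m-1)^{n-1}$ valid for all $m\in\mathbb{N}$, and the characterization of the equality case when $m\ge 3$. Both will be driven by the deletion--contraction recurrence $P(G,m)=P(G-e,m)-P(G/e,m)$, where $G-e$ denotes deletion of the edge $e$ and $G/e$ its contraction (with resulting multiple edges suppressed). The starting observation is that a tree $T$ on $n$ vertices satisfies $P(T,m)=m(m-1)^{n-1}$: rooting $T$ and colouring its vertices in a search order from the root, the root admits $m$ colours and every other vertex has exactly one previously coloured neighbour, hence $m-1$ admissible colours.

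For the inequality I would induct on $|E(G)|$. If $G$ is a tree, the formula above gives equality and settles the base case. If $G$ is not a tree it contains a cycle; choosing an edge $e$ on that cycle keeps $G-e$ connected on the same $n$ vertices with one fewer edge, so the induction hypothesis yields $P(G-e,m)\le m(m-1)^{n-1}$. Since $P(G/e,m)$ counts proper colourings it is nonnegative, and the recurrence gives $P(G,m)=P(G-e,m)-P(G/e,m)\le P(G-e,m)\le m(m-1)^{n-1}$, completing the induction.

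For the equality characterization with $m\ge 3$, the ``if'' direction is exactly the tree formula. For ``only if'' I would establish the contrapositive: every connected non-tree $G$ satisfies $P(G,m)<m(m-1)^{n-1}$, again by induction on $|E(G)|$ with $e$ a cycle edge chosen so that $G-e$ stays connected. If $G-e$ is still not a tree, the strict induction hypothesis gives $P(G-e,m)<m(m-1)^{n-1}$, and nonnegativity of $P(G/e,m)$ propagates the strict bound to $P(G,m)$. The base case is when $G-e$ is a tree, that is, $G$ is unicyclic; then $P(G-e,m)=m(m-1)^{n-1}$ exactly, and the strict drop must come from showing $P(G/e,m)>0$.

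The main obstacle is precisely this last positivity, and it is where the hypothesis $m\ge 3$ enters. Having picked $e$ on the unique cycle, contracting it turns a unicyclic graph into a connected graph on $n-1$ vertices that is again either a tree (when the cycle had length $3$) or unicyclic (when the cycle was longer), hence $3$-colourable; since $\chi(G/e)\le 3\le m$ we get $P(G/e,m)\ge 1>0$, forcing $P(G,m)<m(m-1)^{n-1}$. The restriction $m\ge 3$ is genuinely necessary: for $m=2$ an even cycle is a connected non-tree with $P(C_{2k},2)=2=2\cdot 1^{\,2k-1}$, so equality can hold off the tree family. Assembling the two inductions then delivers both the bound and the equality condition.
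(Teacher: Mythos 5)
Your proof is correct, and the delicate points are all handled: the deletion--contraction identity $P(G,m)=P(G-e,m)-P(G/e,m)$ plus nonnegativity of $P(G/e,m)$ at natural numbers gives the bound by induction on $|E(G)|$ for every $m\in\mathbb{N}$, and in the equality analysis you correctly reduce to the unicyclic base case, where contracting a cycle edge yields a connected graph on $n-1$ vertices that is a tree or unicyclic, hence has chromatic number at most $3$, so $P(G/e,m)\ge 1$ once $m\ge 3$; your even-cycle example $P(C_{2k},2)=2=2\cdot 1^{2k-1}$ rightly shows that the hypothesis $m\ge 3$ cannot be dropped. Bear in mind, though, that the paper contains no proof of this statement to match against: Theorem \ref{cp1} is quoted verbatim from \cite{DKT05}, where the usual argument is a greedy/spanning-tree ordering (each vertex after the first has a previously coloured neighbour, giving at most $m(m-1)^{n-1}$ colourings, with a closer look at a vertex having two earlier neighbours for the equality case), rather than your deletion--contraction induction; both are standard and of comparable length. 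The more instructive comparison is with what the paper actually proves, namely the DP analogue (Theorem \ref{t}): there no deletion--contraction identity is available for $P_{DP}$, so the paper replaces your recurrence by probabilistic estimates over random covers --- adding an edge scales the count by $(m-1)/m$ in expectation (Theorem \ref{3}), and deleting a vertex of degree $d$ costs a factor $m(1-1/m)^{d}$ (Theorem \ref{6}). Your route is thus perfectly sound for the chromatic polynomial, but, unlike the paper's probabilistic machinery, it would not transfer to the DP setting that is the paper's real concern.
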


For the family of $2$-connected graphs, Tomescu obtained the following result.

\begin{theorem}[\cite{Tomescu1994}, Theorem 2.1]\label{cp2} Let $G$ be a $2$-connected graph with $n$ vertices, where $n\geq 3$. Then for all $m\in \mathbb{N}$ with $m\geq 3$, $$P(G,m)\leq (m-1)^n+(-1)^n(m-1),$$ where equality holds if and only if $G\cong C_{n}$; or $G\cong K(2,3)$ for the case that $n=5$ and $m=3$.
\end{theorem}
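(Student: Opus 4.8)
The plan is to prove the inequality by induction on the number of edges of $G$, organised through an ear decomposition, and to extract the two equality cases from the tightness analysis. The base case is $|E(G)|=n$: a $2$-connected graph with $n$ vertices and $n$ edges is $2$-regular and connected, hence $G\cong C_n$, for which the classical recursion $P(C_n,m)=m(m-1)^{n-1}-P(C_{n-1},m)$ solves to $P(C_n,m)=(m-1)^n+(-1)^n(m-1)$, so equality holds. For the inductive step I assume $G\ncong C_n$, so $G$ admits an ear decomposition with at least one non-initial ear; let $P$ be a last ear, of length $\ell$ (number of edges), with endpoints $u,v$ on the $2$-connected graph $H:=G-\operatorname{int}(P)$, where $n':=|V(H)|=n-\ell+1$ and $|E(H)|<|E(G)|$.

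The key tool is an exact identity describing how the chromatic polynomial changes under attaching an ear. Writing $b$ for the number of proper $m$-colourings of $H$ in which $u$ and $v$ receive the same colour (so $b\ge 0$, as it counts colourings), a transfer-matrix computation for colourings of the internal path of $P$ with prescribed end-colours yields
\[
P(G,m)=P(H,m)\cdot\frac{(m-1)^{\ell}-(-1)^{\ell}}{m}+(-1)^{\ell}b .
\]
Since the coefficient $\tfrac{(m-1)^{\ell}-(-1)^{\ell}}{m}\ge 0$, the sign of the correction term is governed by the parity of $\ell$, and I would treat the two parities separately, in each case inserting the induction hypothesis $P(H,m)\le (m-1)^{n'}+(-1)^{n'}(m-1)$. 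For $\ell$ odd the correction $(-1)^{\ell}b=-b\le 0$ only helps, so $P(G,m)\le P(H,m)\cdot\frac{(m-1)^{\ell}+1}{m}$, and substituting the induction hypothesis reduces the target inequality to an elementary one in $x:=m-1$ that holds, strictly whenever $\ell\ge 3$ or $b>0$. A single-edge ear ($\ell=1$) forces $u,v$ nonadjacent in $H$ and hence $b>0$, so the odd case always gives strict inequality and produces no extremal graph beyond $C_n$.

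The even case is the crux, because there $(-1)^{\ell}b=+b\ge 0$ works against us and must be controlled sharply. Here I would bound $b$ using Theorem~\ref{cp1}: identifying $u$ and $v$ in $H$ produces a connected graph on $n'-1$ vertices, whence $b\le m(m-1)^{n'-2}$. Feeding this together with the induction hypothesis into the identity reduces the claim to the purely algebraic inequality
\[
x^{\,n'+\ell-1}-2x^{\,n'-1}-x^{\,n'-2}-(-1)^{n'}\bigl(x^{2}+x^{\ell+1}\bigr)\ge 0 .
\]
For $n'$ odd the last bracket is added and the leading term dominates, giving a strict inequality; for $n'$ even the only delicate configuration is $n'=4,\ \ell=2$, where the left-hand side factors cleanly as $x^{2}(x-2)(x+1)^{2}$, which is nonnegative and vanishes exactly at $x=2$.

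I expect the main obstacle to be exactly this even case: establishing that the bound $b\le m(m-1)^{n'-2}$ is sharp only in the extremal configuration, and carrying out the equality analysis so that precisely $K(2,3)$ survives. Tracing the chain of equalities backwards should do it: equality forces $P(H,m)$ to be extremal, hence (by induction) $H\cong C_4$; tightness of the bound on $b$ forces $u$ and $v$ to be antipodal on $C_4$; and the factor $(x-2)$ forces $x=2$, i.e. $m=3$. Reassembling these constraints yields $G\cong K(2,3)$ with $n=5$ and $m=3$ as the unique non-cycle extremal graph, matching the statement.
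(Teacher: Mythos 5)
First, a point of reference: the paper does not prove Theorem \ref{cp2} at all --- it is imported verbatim from Tomescu \cite{Tomescu1994} --- so there is no internal proof to compare against. What your argument most resembles is the paper's own proof of the \emph{DP analogue} (Theorem \ref{5}), which likewise inducts along an ear decomposition; your exact transfer-matrix identity
$P(G,m)=P(H,m)\frac{(m-1)^{\ell}-(-1)^{\ell}}{m}+(-1)^{\ell}b$
is the deterministic chromatic-polynomial counterpart of the expectation computation in Theorem \ref{2}, refined by the correction term $(-1)^{\ell}b$ that the DP argument averages away. I verified the identity (transfer matrix $J-I$ with eigenvalues $m-1$ and $-1$), and your even-case reduction is algebraically correct: inserting $P(H,m)\le x^{n'}+(-1)^{n'}x$ and $b\le (x+1)x^{n'-2}$ and clearing $(x+1)$ yields exactly your displayed polynomial $x^{n'+\ell-1}-2x^{n'-1}-x^{n'-2}-(-1)^{n'}\bigl(x^{2}+x^{\ell+1}\bigr)$, which at $(n',\ell)=(4,2)$ factors as $x^{2}(x+1)^{2}(x-2)$ as you claim, is strictly positive for all other admissible even $(n',\ell)$ with $x\ge 2$, and is strictly positive for $n'$ odd. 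The equality trace ($H\cong C_4$, $H/uv$ a tree so $u,v$ antipodal, $x=2$) correctly reassembles to $K(2,3)=\theta(2,2,2)$, $n=5$, $m=3$. Note in passing that $H/uv$ may be a multigraph; since chromatic polynomials ignore parallel edges this is harmless, but Theorem \ref{cp1} is stated for simple graphs and the reduction deserves a sentence.

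There is one genuinely false sub-step: in the odd case with $\ell=1$ you assert that $u,v$ nonadjacent in $H$ forces $b>0$. This fails in general --- take $H=K_5-uv$ and $m=3$: the vertices $u,v$ are nonadjacent, but $\chi(H)=4>3$, so $P(H,3)=0$ and $b=0$. The patch is easy and stays within your framework: when $b=0$ the identity gives $P(G,m)=P(H,m)$, so equality in the theorem would force $H$ itself to be extremal, i.e.\ $H\cong C_n$ or $H\cong K(2,3)$ with $m=3$; for each of these one checks directly that every pair of nonadjacent vertices receives a common color in some proper $m$-coloring ($m\ge 3$), so $b>0$ after all, a contradiction, and the $\ell=1$ case is always strict. (In the degenerate situations like $K_5-uv$ where $b=0$ really occurs, $P(H,m)=0$ makes the bound trivially strict, so nothing is lost.) With that repair, and the multigraph remark above, your proof is complete and is a legitimate self-contained route to Tomescu's theorem --- arguably a nice bonus, since it runs the paper's Section~2 ear-decomposition machinery in the classical chromatic setting.
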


In \cite{Felix2019}, Felix gave a survey on the upper bounds for the chromatic polynomials of graphs of given order and size. Recently, some authors focus on the upper bounds for the chromatic polynomials of $n$-vertex graphs with chromatic number $k$, they obtained some inspired results, see \cite{EEFH21, Erey08, Erey18, FHM19, KM19, KM20} for example.

In this paper we obtain analogous results to that in Theorems \ref{cp1} and \ref{cp2}, in the context of DP-coloring. The DP-coloring (also called corresponding coloring) is a generalization of the list coloring, introduced by Dvo\v{r}\'{a}k and Postle \cite{Dvorak2018}.

\begin{definition}[\cite{Dvorak2018}] Let $G$ be a graph. If $X,Y\subseteq V(G)$, we use $G[X]$ for the subgraph of $G$ induced by $X$, and we use $E_{G}(X,Y)$ for the subset of $E(G)$ with one endpoint in $X$ and one endpoint in $Y$. Given a set $S$, $\mathcal{P}(S)$ is the power set of $S$.

\begin{itemize}

\item A \emph{cover} of a graph $G$ is a pair $\mathcal{H}=(L,H)$ consisting of a graph $H$ and a function $L: V(G)\to \mathcal{P}(V(H))$ satisfying the following four requirements:

\noindent(1) the set $\{L(u):u\in V(G)\}$ is a partition of $V(H)$;

\noindent(2) for every $u\in V(G)$, the graph $H[L(u)]$ is complete;

\noindent(3) if $E_{H}(L(u),L(v))$ is nonempty, then $u=v$ or $uv\in E(G)$;

\noindent(4) if $uv\in E(G)$, then $E_{H}(L(u),L(v))$ is a matching (the matching may be empty).

  \item A cover $\mathcal{H}=(L,H)$ is \emph{$m$-fold} if $\vert L(u)\vert =m$ for each $u\in V(G)$, and $\mathcal{H}$ is \emph{full} if for each $uv\in E(G)$, $E_{H}(L(u),L(v))$ is a perfect matching.
  \item An \emph{$\mathcal{H}$-coloring} of $G$ is an independent set in $H$ of size $\vert V(G)\vert$.

  \item The \emph{DP-chromatic number} of $G$, denoted by $\chi_{DP}(G)$, is the smallest $m\in \mathbb{N}$
such that $G$ admits an $\mathcal{H}$-coloring for every $m$-fold cover $\mathcal{H}$ of $G$.
\end{itemize}

\end{definition}

In 2021, Kaul and Mudrock \cite{Kaul2021} gave the definition of DP color function.

\begin{definition}[\cite{Kaul2021}] Let $\mathcal{H}=(L,H)$ be a cover of a graph $G$. We denote $P_{DP}(G,\mathcal{H})$  the number of $\mathcal{H}$-colorings of $G$. The \emph{DP color function} of $G$, denoted by $P_{DP}(G,m)$, is the minimum value of $P_{DP}(G,\mathcal{H})$
where the minimum is taken over all possible $m$-fold covers $\mathcal{H}$ of $G$.
\end{definition}

By the definition of chromatic polynomial and DP color function, $P_{DP}(G,m)\leq P(G,m)$ holds for any graph $G$ and $m\in \mathbb{N}$. Let $\mathcal{H}=(L,H)$ be an $m$-fold cover of a graph $G$. We say that $\mathcal{H}$ has a {\it canonical labeling} if it is possible to name the vertices of $H$ so that $L(u)=\{(u,j):j\in [m]\}$ for each $u\in V(G)$ and $(u,j)(v,j)\in E(H)$ for each $j\in [m]$  whenever $uv\in E(G)$. It is well known that if $\mathcal{H}$ has a canonical labeling, then $P_{DP}(G,\mathcal{H})=P(G,m)$ holds for each $m\in \mathbb{N}$. So there is a natural question as following.

\begin{question}\label{q} Suppose that $\mathcal{H}=(L,H)$ is a full $m$-fold cover of a graph $G$ and $P_{DP}(G, \mathcal{H})=P(G,m)$. Does $\mathcal{H}$ have a canonical labeling?
\end{question}

By finding two examples, we give a negative answer to this question. But, considering our examples are only for some small $m$, we have the following question.

\begin{question}\label{p}
Suppose that $\mathcal{H}=(L,H)$ is a full $m$-fold cover of a graph $G$, does there exist some $M\in \mathbb{N}$ such that for each $m\geq M$, if $P_{DP}(G,\mathcal{H})=P(G,m)$, then $\mathcal{H}$ has a canonical labeling?
\end{question}

Question \ref{p} is closely related to (but not equivalent to) the Problem $3$ in \cite{DY21}. In order to compare DP color functions with chromatic polynomials, Dong and Yang \cite{DY21} defined a class of graphs called $\mathcal{DP^*}$. It denotes the set of graphs $G$ for which there exists $M\in \mathbb{N}$ such that for every $m$-fold cover $\mathcal{H}=(L,H)$ of $G$, if $\mathcal{H}$ has no canonical labelings, then $P_{DP}(G,\mathcal{H})>P(G,m)$ holds for all $m\geq M$. Then, for each graph $G\in\mathcal{DP^*}$, $P_{DP}(G,m)=P(G,m)$ holds when $m\geq M$. And they posed the following question, i.e., the Problem $3$ in \cite{DY21}.

\begin{question}[\cite{DY21}]\label{p1} For a graph $G$,  is  it  true that if there exists an $M\in \mathbb{N}$ such that $P_{DP}(G,m)=P(G,m)$ holds for all $m\geq M$, then $G\in\mathcal{DP^*}$ ?

\end{question}

In \cite{DY21}, some types of graphs have been proved belonging to $\mathcal{DP^*}$.
For example, if a graph $G$ contains a spanning tree $T$ such that for each $e\in E(G)\setminus E(T)$, $\ell(e)$ is odd and $e$ is contained in a cycle $C$ of length $\ell(e)$ with the property that $\ell(e')<\ell(e)$ holds for each $e'\in E(C)\setminus (E(T)\cup \{e\})$, then $G\in\mathcal{DP^*}$, where $\ell(e)=\infty$ if $e$ is a bridge of $G$, and $\ell(e)$ is the length of a shortest cycle in $G$ containing $e$ otherwise. By the definition of $\mathcal{DP^*}$, we have the following result.
\begin{proposition}\label{81}
If a graph $G\in \mathcal{DP^*}$, then there exists an $M\in \mathbb{N}$
such that for any full $m$-fold cover $\mathcal{H}$ of $G$ with $P_{DP}(G,\mathcal{H})=P(G,m)$, where $m\geq M$, $\mathcal{H}$ has a canonical labeling.
\end{proposition}

In Section $2$, we obtain tight upper bounds for the DP color function of $n$-vertex $2$-connected graphs, and give two new proofs for the upper bounds for the DP color function of $n$-vertex connected graphs. In Section $3$, we give two examples that $P_{DP}(G,\mathcal{H})=P(G,m)$ but $\mathcal{H}$ has no canonical labelings, and also give positive answers to Question \ref{p} for unicyclic graphs and theta graphs.

We also note that Kaul, Mudrock, and their coauthors obtain lots of results on DP color function, see \cite{BHK22, HKL20, KMSS21, KMMT21, MT21, Mu22} for example, they study the asymptotics of $P(G,m)-P_{DP}(G,m)$ for a fixed graph $G$, they develop techniques to evaluate $P_{DP}(G,m)$ for some classes of graphs such as chordal graphs, unicyclic graphs, theta graphs, Cartesian product graphs, joint graphs, vertex-gluings graphs, and clique-gluings graphs, etc. Zhang and Dong \cite{ZD22} give some sufficient conditions for graphs belong to $DP_{\approx}$ ($DP_{<}$, respectively) where $DP_{\approx}$ ($DP_{<}$, respectively) is the set of graphs $G$ for which there exists an $M\in \mathbb{N}$ such that $P_{DP}(G,m)=P(G,m)$ ($P_{DP}(G,m)<P(G,m)$, respectively) holds for all $m\geq M$. Their results extend Dong and Yang's results in \cite{DY21}.

\section{Bounds for DP color function of $2$-connected graphs}

In \cite{Kaul2021}, the authors obtained an upper bound for the DP color function of an arbitrary graph, by using a probabilistic argument.

\begin{lemma}[\cite{Kaul2021}]\label{up0} For any graph $G$ and all $m\in \mathbb{N}$,
$$P_{DP}(G,m)\leq\frac{m^{|V(G)|}(m-1)^{|E(G)|}}{m^{|E(G)|}}.$$
\end{lemma}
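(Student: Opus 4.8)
The plan is to prove the bound by a first-moment (averaging) argument over a suitable random family of covers. Since $P_{DP}(G,m)$ is defined as the \emph{minimum} of $P_{DP}(G,\mathcal{H})$ over all $m$-fold covers $\mathcal{H}$, it suffices to exhibit a random $m$-fold cover whose \emph{expected} number of $\mathcal{H}$-colorings equals the claimed right-hand side: the minimum over all covers is then at most this expectation, and some realized cover attains a value no larger than the mean.

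First I would set up the random cover. Fix canonical vertex names $L(u)=\{(u,j):j\in[m]\}$ for each $u\in V(G)$, so that an $\mathcal{H}$-coloring—an independent set in $H$ of size $|V(G)|$—is forced to pick exactly one vertex from each part $L(u)$ (each part is a clique), and hence corresponds to a map $f\colon V(G)\to[m]$ selecting $(u,f(u))$ from every part. For every edge $uv\in E(G)$ independently, let $E_H(L(u),L(v))$ be a uniformly random perfect matching between the two $m$-element sets $L(u)$ and $L(v)$. This produces a random full $m$-fold cover $\mathcal{H}$.

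Next I would compute, for a fixed map $f\colon V(G)\to[m]$, the probability that the transversal it determines is independent in $H$. This transversal fails to be independent exactly when some edge $uv$ has $(u,f(u))(v,f(v))\in E(H)$, i.e.\ when the random matching on $uv$ pairs $(u,f(u))$ with $(v,f(v))$. The key elementary computation is that for a uniformly random perfect matching between two $m$-sets, the partner of any fixed vertex is uniformly distributed over the opposite part—there are $(m-1)!$ matchings pairing it with each prescribed vertex, out of $m!$ total—so this pairing occurs with probability $1/m$ and is avoided with probability $(m-1)/m$. Because the matchings on distinct edges are chosen independently, the avoidance events over $uv\in E(G)$ are mutually independent, whence the transversal is independent with probability $\bigl(\tfrac{m-1}{m}\bigr)^{|E(G)|}$, the same value for every $f$.

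Finally I would apply linearity of expectation. Summing over all $m^{|V(G)|}$ maps $f$, the expected number of $\mathcal{H}$-colorings is
$$\mathbb{E}\bigl[P_{DP}(G,\mathcal{H})\bigr]=m^{|V(G)|}\Bigl(\frac{m-1}{m}\Bigr)^{|E(G)|}=\frac{m^{|V(G)|}(m-1)^{|E(G)|}}{m^{|E(G)|}},$$
and since $P_{DP}(G,m)$ is the minimum over all covers, the stated bound follows. I do not anticipate a genuine obstacle; the only points needing care are the marginal-uniformity of a uniformly random perfect matching and the observation that restricting the random model to \emph{full} covers is harmless, because deleting edges of $H$ can only increase the number of independent transversals, so the minimum over all $m$-fold covers is bounded above by the minimum (and hence the average) over full ones.
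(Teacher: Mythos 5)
Your argument is correct in every step: the observation that an $\mathcal{H}$-coloring must be a transversal picking exactly one vertex from each clique part $L(u)$, the marginal uniformity of a uniformly random perfect matching (probability $1/m$ of a prescribed pairing, via the count $(m-1)!/m!$), the mutual independence of the matchings across distinct edges, and linearity of expectation together give $\mathbb{E}[P_{DP}(G,\mathcal{H})]=m^{|V(G)|}\bigl(\tfrac{m-1}{m}\bigr)^{|E(G)|}$, and the min-versus-mean step is sound. It is, however, a genuinely different route from the one this paper takes (it is essentially the original global probabilistic argument of Kaul and Mudrock). The paper does not randomize all matchings simultaneously; it first proves the local estimate of Theorem \ref{6}: for a single vertex $w$ of degree $d$, randomizing only the $d$ matchings at $w$ on top of an \emph{arbitrary} full $m$-fold cover $\mathcal{H'}$ of $G-\{w\}$ yields an expected $m(1-\tfrac{1}{m})^{d}$ extensions per $\mathcal{H'}$-coloring. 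Lemma \ref{up0} is then obtained by deleting the vertices one at a time and telescoping these inequalities, using that the degrees at deletion time sum to $|E(G)|$. Your one-shot computation is more direct and self-contained, but it requires the (correctly stated) joint independence of all $|E(G)|$ matchings; the paper's factored version buys modularity, since Theorem \ref{6} holds relative to an arbitrary cover of the smaller graph rather than a fully random one, and the same local lemma simultaneously drives the paper's second proof of Theorem \ref{t}. One small remark: your closing caveat about restricting to full covers is superfluous, since $P_{DP}(G,m)$ is by definition a minimum over all $m$-fold covers and any realization of your random full cover is one of them; the monotonicity observation you give is true but not needed.
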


 For a connected graph, Kaul and Mudrock \cite{Kaul2021} gave the following result.

\begin{lemma}[\cite{Kaul2021}]\label{up1} For any connected graph $G$ and all $m\in \mathbb{N}$,
$$P_{DP}(G,m)=\frac{m^{|V(G)|}(m-1)^{|E(G)|}}{m^{|E(G)|}}$$ if and only if $G$ is a tree.
\end{lemma}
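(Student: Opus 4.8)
The plan is to prove the two implications separately, writing $\mu_G := m^{|V(G)|-|E(G)|}(m-1)^{|E(G)|}$ for the quantity on the right-hand side, treating the forward direction (``$G$ a tree $\Rightarrow$ equality for all $m$'') as the easy case and the converse (``$G$ connected but not a tree $\Rightarrow$ strict inequality for every $m\ge 2$'') by induction on the cyclomatic number $|E(G)|-|V(G)|+1$. For the tree direction I would fix any cover $\mathcal H$ of a tree $T$ on $n$ vertices, root $T$, and count $\mathcal H$-colorings greedily along a search order: the root contributes $m$ choices, and each further vertex, having exactly one already-processed neighbour (its parent), loses at most one admissible value through the corresponding matching. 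This shows every cover has at least $m(m-1)^{n-1}=\mu_T$ colorings, while a full cover attains exactly this; hence $P_{DP}(T,m)=\mu_T$.

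The heart of the argument is the converse, for which I would first isolate a reduction step. If $e=xy$ is a non-bridge edge of $G$ (which exists whenever $G$ has a cycle), I pick a cover $\mathcal H'$ of $G-e$ with $P_{DP}(G-e,\mathcal H')=P_{DP}(G-e,m)=:N$ and extend it to $G$ by a matching on $e$ encoded as a bijection $\pi$ of the two fibres. Deleting the matched pairs shows $P_{DP}(G,\mathcal H)=N-\sum_{a}f(a,\pi(a))$, where $f(a,b)$ counts the $\mathcal H'$-colorings sending $x,y$ to the $a$-th and $b$-th vertices of their fibres, so that $\sum_{a,b}f(a,b)=N$. Averaging $\sum_a f(a,\pi(a))$ over all $\pi$ gives $N/m$, so the best matching kills at least $N/m$ colorings, yielding the key estimate $P_{DP}(G,m)\le \tfrac{m-1}{m}P_{DP}(G-e,m)$. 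Since $\tfrac{m-1}{m}\mu_{G-e}=\mu_G$, this simultaneously reproves the bound and drives the induction: if $G-e$ is still a connected non-tree, the inductive strict inequality $P_{DP}(G-e,m)<\mu_{G-e}$ passes through the factor $\tfrac{m-1}{m}$ to give $P_{DP}(G,m)<\mu_G$.

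The remaining, and genuinely delicate, case is the base case where $G-e$ is a tree, i.e. $G$ is unicyclic; here $P_{DP}(G-e,m)=\mu_{G-e}$ with equality, so strictness must come entirely from the matching step. I would take $\mathcal H'$ to be the canonical cover of the tree $T=G-e$, so that $f(a,b)$ equals the number of proper colorings of $T$ with $c(x)=a$ and $c(y)=b$; since $x$ and $y$ are joined in $T$ by a path of length $\ell\ge 2$, a transfer-matrix computation (equivalently, the eigenvalues of $A=J-I$) shows that $f(a,b)$ takes one value $\alpha$ when $a=b$ and another value $\beta\neq\alpha$ when $a\neq b$. Because the objective $\pi\mapsto\sum_a f(a,\pi(a))$ is then non-constant — it equals $m\alpha$ for the identity matching and $m\beta$ for any fixed-point-free matching — its maximum strictly exceeds its average $N/m$, so the optimal matching kills strictly more than $N/m$ colorings and forces $P_{DP}(G,m)<\tfrac{m-1}{m}N=\mu_G$.

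The main obstacle is exactly this strict gain in the base case: the averaging bound is tight on trees, so one cannot win merely by the pigeonhole averaging used in the inductive step. Instead one must exploit the ``diagonal versus off-diagonal'' discrepancy $\alpha\neq\beta$ of $f$ — which is precisely the combinatorial trace of the cycle that $e$ closes — to beat the average. I expect this to be the one place requiring care, and it also explains the role of the hypothesis $m\ge 2$: for $m=1$ the matrix $f$ degenerates, the discrepancy vanishes, and equality legitimately returns, consistent with reading the stated equivalence as ``equality holds for all $m$ if and only if $G$ is a tree.''
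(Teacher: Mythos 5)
Your proposal is correct, but it reaches the equality characterization by a genuinely different route than the paper. Your key estimate $P_{DP}(G,m)\le \frac{m-1}{m}P_{DP}(G-e,m)$ is precisely the paper's Theorem \ref{3} (stated there for edge addition), and your derivation is the same device: extend an optimal cover of $G-e$ by a perfect matching between the fibres of $x$ and $y$, observe $P_{DP}(G,\mathcal{H})=N-\sum_a f(a,\pi(a))$, and average; your pigeonhole over the $m!$ matchings is an equivalent phrasing of the paper's linearity-of-expectation argument. The divergence is in how strictness for non-trees is obtained. The paper does not prove Lemma \ref{up1} itself (it cites \cite{Kaul2021}); its own re-proof of the companion Theorem \ref{t} gets strictness through the DP-chromatic number: Corollary \ref{4} yields a strict drop only when $m\ge\max\{2,\chi_{DP}(G)\}$, Abe's result (\cite{Abe}) that $\chi_{DP}(G)\le 2$ iff $G$ is a tree certifies $\chi_{DP}(G)\ge 3$ for non-trees, and the remaining range $2\le m<\chi_{DP}(G)$ is disposed of by $P_{DP}(G,m)=0$. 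You instead induct on the cyclomatic number and, in the unicyclic base case --- exactly where the averaging bound is tight and the paper's positivity trick would otherwise be invoked --- you extract a strict gain from the transfer-matrix discrepancy: with $T=G-e$ canonically covered, $f(a,b)=(m-1)^{\,n-1-\ell}(A^{\ell})_{ab}$ for $A=J-I$, so $\alpha-\beta=(-1)^{\ell}(m-1)^{\,n-1-\ell}\neq 0$ for $m\ge 2$, the objective takes the two values $m\alpha$ (identity) and $m\beta$ (any fixed-point-free matching, which exists since $m\ge 2$), and its maximum strictly exceeds the mean $N/m$. This buys a self-contained proof (no appeal to \cite{Abe} or to vanishing below the DP-chromatic number), a direct greedy verification of the tree case valid for \emph{all} covers rather than via canonical labelings (Proposition \ref{8}), and as a bonus your base-case optimum reproduces the exact cycle values of Lemma \ref{cycle}: for $G=C_n$ one gets $N-m\alpha=(m-1)^n-(m-1)$ ($n$ odd) and $N-m\beta=(m-1)^n-1$ ($n$ even). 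Your closing remark on $m=1$ is also the right reading: for any connected graph with an edge both sides vanish at $m=1$, so the stated equivalence must be quantified over all $m$ (equivalently, strictness is claimed only for $m\ge 2$), which is exactly what your argument delivers.
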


 Combining Lemmas \ref{up0} and \ref{up1}, one can obtain the following result easily.

\begin{theorem}\label{t} Let $G$ be a connected graph with $n$ vertices. Then for all $m\in \mathbb{N}$,
$$P_{DP}(G,m)\leq m(m-1)^{n-1}$$ where equality holds for $m\geq 2$ if and only if $G$ is a tree.
\end{theorem}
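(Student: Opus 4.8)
The plan is to combine Lemma~\ref{up0} with the elementary fact that a connected graph on $n$ vertices has at least $n-1$ edges, with equality exactly for trees. First I would rewrite the bound in Lemma~\ref{up0} as
$$\frac{m^{|V(G)|}(m-1)^{|E(G)|}}{m^{|E(G)|}}=m^{n}\left(\frac{m-1}{m}\right)^{|E(G)|},$$
isolating the edge-dependence in a single factor whose base $\frac{m-1}{m}$ lies in $[0,1]$ for every $m\in\mathbb{N}$. Since $G$ is connected we have $|E(G)|\geq n-1$, and because $0\leq \frac{m-1}{m}\leq 1$ a larger exponent can only decrease the factor; hence
$$m^{n}\left(\frac{m-1}{m}\right)^{|E(G)|}\leq m^{n}\left(\frac{m-1}{m}\right)^{n-1}=m(m-1)^{n-1}.$$
Chaining this with Lemma~\ref{up0} gives $P_{DP}(G,m)\leq m(m-1)^{n-1}$ for all $m\in\mathbb{N}$, which is the claimed inequality.

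To settle the equality case for $m\geq 2$ I would argue both directions. If $G$ is a tree, then $|E(G)|=n-1$ makes the monotonicity step above an equality, while Lemma~\ref{up1} makes Lemma~\ref{up0} tight; together these give $P_{DP}(G,m)=m(m-1)^{n-1}$ for every $m$. Conversely, if $G$ is connected but not a tree, then it contains a cycle and so $|E(G)|\geq n$. For $m\geq 2$ the base satisfies $0<\frac{m-1}{m}<1$, which turns the monotonicity step into a strict inequality
$$m^{n}\left(\frac{m-1}{m}\right)^{|E(G)|}<m^{n}\left(\frac{m-1}{m}\right)^{n-1}=m(m-1)^{n-1};$$
passing this through Lemma~\ref{up0} yields $P_{DP}(G,m)<m(m-1)^{n-1}$, so equality fails.

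I expect no serious obstacle, consistent with the statement that the theorem follows ``easily'' from Lemmas~\ref{up0} and~\ref{up1}. The only points requiring care are the direction of the monotonicity inequality—the base is at most $1$, so enlarging the exponent works in our favour—and the precise role of the hypothesis $m\geq 2$. The latter is exactly what excludes the degenerate value $m=1$, where the base equals $0$ and both sides vanish for every graph containing an edge, so that equality there would no longer single out trees.
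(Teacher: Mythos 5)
Your proof is correct, and it is exactly the argument the paper gestures at but never writes out: just before Theorem \ref{t} the authors say the result follows ``easily'' by combining Lemmas \ref{up0} and \ref{up1}, and your chain --- rewrite the bound as $m^{n}\bigl(\frac{m-1}{m}\bigr)^{|E(G)|}$, use $|E(G)|\geq n-1$ with equality iff $G$ is a tree, and exploit monotonicity in the exponent --- is the natural way to cash that in. The two proofs the paper actually writes out take different routes. The first starts from a spanning tree $T$ (with $P_{DP}(T,m)=m(m-1)^{n-1}$) and adds the missing edges one at a time, using Corollary \ref{4} (itself proved via Theorem \ref{3} by a random-perfect-matching expectation argument) to get a strict drop at each addition; since Corollary \ref{4} needs $m\geq\max\{2,\chi_{DP}(G)\}$, that proof must separately invoke Abe's fact that $\chi_{DP}(G)\leq 2$ iff $G$ is a tree to dispose of the range $2\leq m<\chi_{DP}(G)$, where $P_{DP}(G,m)=0$. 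The second proof deletes vertices one at a time via Theorem \ref{6}, thereby re-deriving Lemma \ref{up0} from scratch, and then finishes essentially as you do. Your version buys brevity by treating Lemmas \ref{up0} and \ref{up1} as black boxes, and it has one genuine simplification worth noting: for the converse you get strictness already at the level of the upper bound, since $|E(G)|\geq n$ forces $m^{n}\bigl(\frac{m-1}{m}\bigr)^{|E(G)|}<m(m-1)^{n-1}$ for $m\geq 2$, so Lemma \ref{up1} is needed only for the ``tree implies equality'' direction and no DP-chromatic-number input is required at all. Your closing remark about $m=1$ is also on target: there both sides vanish for every connected graph with an edge, which is precisely why the equality characterization is restricted to $m\geq 2$.
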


By considering the effect of the edge or vertex deletion on the DP color function, we will give two new proofs for
Theorem \ref{t}.

\begin{theorem}\label{3}
Let $G$ be a graph with $n$ vertices and $u,v$ be two distinct vertices in  $V(G)$ with $uv\not\in E(G)$.  If $G'=G+\{uv\}$, then for all $m\in \mathbb{N}$,
$$P_{DP}(G',m)\leq P_{DP}(G,m)\frac{m-1}{m}.$$
\end{theorem}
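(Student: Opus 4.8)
The plan is to exploit that $P_{DP}(G',m)$ is a minimum over covers: it suffices to exhibit one $m$-fold cover $\mathcal{H}'$ of $G'$ with $P_{DP}(G',\mathcal{H}')\le P_{DP}(G,m)\cdot\frac{m-1}{m}$, and I would produce such a cover by perturbing a minimizing cover of $G$. First I would fix an $m$-fold cover $\mathcal{H}=(L,H)$ of $G$ attaining the minimum, i.e.\ $P_{DP}(G,\mathcal{H})=P_{DP}(G,m)$ (such a cover exists, since there are finitely many $m$-fold covers of $G$ up to isomorphism). Since $uv\notin E(G)$ and $u\neq v$, requirement (3) in the definition of a cover forces $E_H(L(u),L(v))=\emptyset$. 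Writing $L(u)=\{u_1,\dots,u_m\}$ and $L(v)=\{v_1,\dots,v_m\}$, for each bijection $\sigma\in S_m$ let $M_\sigma=\{u_iv_{\sigma(i)}:i\in[m]\}$, and let $\mathcal{H}'_\sigma$ be the cover of $G'$ obtained from $\mathcal{H}$ by adding the matching $M_\sigma$ between $L(u)$ and $L(v)$. One checks directly that $\mathcal{H}'_\sigma$ satisfies all four requirements and is a genuine $m$-fold cover of $G'$: the only new edges are $M_\sigma$, lying between $L(u)$ and $L(v)$ with $uv\in E(G')$, and $M_\sigma$ is a (perfect) matching.

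The key observation is that an $\mathcal{H}$-coloring $I$ (an independent set of size $|V(G)|$ in $H$, hence selecting exactly one vertex of each $L(w)$) is an $\mathcal{H}'_\sigma$-coloring if and only if $I$ contains no edge of $M_\sigma$; that is, if $I$ selects $u_i$ from $L(u)$ and $v_j$ from $L(v)$, then $I$ survives in $\mathcal{H}'_\sigma$ exactly when $\sigma(i)\neq j$. I would then average over the $m!$ choices of $\sigma$ by double counting. Summing $P_{DP}(G',\mathcal{H}'_\sigma)$ over $\sigma\in S_m$ and exchanging the order of summation, each $\mathcal{H}$-coloring $I$ (with its pair $(i,j)$) contributes $1$ for every $\sigma$ with $\sigma(i)\neq j$, and there are exactly $m!-(m-1)!=(m-1)\cdot(m-1)!$ such permutations. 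Hence
\[
\sum_{\sigma\in S_m}P_{DP}(G',\mathcal{H}'_\sigma)=(m-1)\,(m-1)!\cdot P_{DP}(G,\mathcal{H}).
\]

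Dividing by $m!$ shows that the average value of $P_{DP}(G',\mathcal{H}'_\sigma)$ equals $P_{DP}(G,\mathcal{H})\cdot\frac{m-1}{m}$, so some permutation $\sigma^*$ satisfies $P_{DP}(G',\mathcal{H}'_{\sigma^*})\le P_{DP}(G,\mathcal{H})\cdot\frac{m-1}{m}$. Since $P_{DP}(G',m)\le P_{DP}(G',\mathcal{H}'_{\sigma^*})$ and $P_{DP}(G,\mathcal{H})=P_{DP}(G,m)$, the claimed bound follows; the case $m=1$ is also covered, as both sides vanish once $uv$ is an edge. I do not expect a serious obstacle here: the argument is an averaging/double-counting trick, and the only points needing care are verifying that each $\mathcal{H}'_\sigma$ is a legitimate cover of $G'$ (crucially using that $E_H(L(u),L(v))$ was empty) and the count $m!-(m-1)!$ of permutations avoiding one prescribed value.
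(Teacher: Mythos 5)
Your proposal is correct and follows essentially the same route as the paper: the paper adds a perfect matching between $L(u)$ and $L(v)$ chosen uniformly at random from the $m!$ possibilities and computes the expected number of surviving $\mathcal{H}$-colorings as $P_{DP}(G,\mathcal{H})\bigl(1-\frac{1}{m}\bigr)$, which is precisely your double count $\sum_{\sigma\in S_m}P_{DP}(G',\mathcal{H}'_\sigma)=(m-1)(m-1)!\,P_{DP}(G,\mathcal{H})$ phrased probabilistically. The only cosmetic difference is that you fix a minimizing cover of $G$ at the outset, whereas the paper runs the argument for an arbitrary full $m$-fold cover and then appeals to the arbitrariness of $\mathcal{H}$; both steps (including your verification that $E_H(L(u),L(v))=\emptyset$ makes $\mathcal{H}'_\sigma$ a legitimate cover) match the paper's reasoning.
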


\begin{proof}Suppose that $\mathcal{H}=(L,H)$ is an arbitrary full $m$-fold cover of $G$.
Let $L'=L$ and $H'=H+E(L(u),L(v))$ where $E(L(u),L(v))$ is a perfect matching between $L(u)$ and $L(v)$ chosen uniformly at random from the $m!$ possible perfect matchings,  then $\mathcal{H'}=(L',H')$ is a full $m$-fold cover of $G'$. Let $t=P_{DP}(G,\mathcal{H})$ and $\mathcal{I}=\{I_1,\ldots,I_{t}\}$ be the set of all $\mathcal{H}$-colorings of $G$.

For each $i\in[t]$, let $E_i$ be the event that $I_i$ is also an $\mathcal{H'}$-coloring of $G'$. When $I_i\cap L(u)$ is not adjacent to $I_i\cap L(v)$ in $H'$, the event $E_i$ occurs, so
$$Pr[E_i]=1-\frac{1}{m}.$$
Let $X_i$ be the random variable that is one if $E_i$ occurs and zero otherwise. Let $X=\sum_{i=1}^{t}X_i$, then $X$ is the random variable which equals $P_{DP}(G',\mathcal{H'})$. By the linearity of expectation,  the expectation of $X$ is
$$E[X]=\sum\limits_{i=1}^{t}E[X_i]=P_{DP}(G,\mathcal{H})(1-\frac{1}{m}).$$
Then, combining the arbitrariness of $\mathcal{H}=(L,H)$, we have $$P_{DP}(G',m)\leq P_{DP}(G,m)\frac{m-1}{m}.$$
The proof is complete.
\end{proof}

\begin{corollary}\label{4}
Let $G$ be a graph with $n$ vertices and $u,v$ be two distinct vertices in  $V(G)$ with $uv\not\in E(G)$.  If $G'=G+\{uv\}$, then for all $m\in \mathbb{N}$ and $m\geq \max\{2,\chi_{DP}(G)\}$,
$$P_{DP}(G',m)< P_{DP}(G,m).$$
\end{corollary}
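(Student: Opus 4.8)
The plan is to bootstrap directly off Theorem \ref{3}, which already supplies the inequality $P_{DP}(G',m)\leq P_{DP}(G,m)\frac{m-1}{m}$. The only gap between this weak inequality and the strict inequality claimed in the corollary is the possibility that $P_{DP}(G,m)=0$, in which case both sides vanish and strictness fails. Thus the entire content of the corollary reduces to verifying that, under the hypotheses $m\geq 2$ and $m\geq\chi_{DP}(G)$, we have $P_{DP}(G,m)>0$ together with the strict bound $\frac{m-1}{m}<1$.

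First I would invoke Theorem \ref{3} verbatim to obtain $P_{DP}(G',m)\leq P_{DP}(G,m)\frac{m-1}{m}$. Next I would argue that $P_{DP}(G,m)\geq 1$ whenever $m\geq\chi_{DP}(G)$. This follows immediately from the definition of the DP-chromatic number: $\chi_{DP}(G)$ is the least $m$ such that $G$ admits an $\mathcal{H}$-coloring for \emph{every} $m$-fold cover $\mathcal{H}$ of $G$. Hence for $m\geq\chi_{DP}(G)$ and any $m$-fold cover $\mathcal{H}=(L,H)$, there is at least one $\mathcal{H}$-coloring, so $P_{DP}(G,\mathcal{H})\geq 1$; taking the minimum over all covers gives $P_{DP}(G,m)\geq 1>0$.

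Finally, since $m\geq 2$ forces $\frac{m-1}{m}<1$ while $\frac{m-1}{m}>0$, multiplying the strictly positive quantity $P_{DP}(G,m)$ by a factor strictly less than $1$ yields
$$P_{DP}(G',m)\leq P_{DP}(G,m)\frac{m-1}{m}<P_{DP}(G,m),$$
which is the desired conclusion.

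I do not anticipate a genuine obstacle here, as the corollary is essentially a clean strengthening of Theorem \ref{3}. The one point requiring care is the role of each hypothesis: the condition $m\geq\chi_{DP}(G)$ is exactly what rules out the degenerate case $P_{DP}(G,m)=0$, and the condition $m\geq 2$ is exactly what makes the shrinking factor $\frac{m-1}{m}$ strictly smaller than $1$ (note that at $m=1$ the factor is $0$ and strictness can fail). Making explicit that both conditions are used, and for these distinct reasons, is the only subtlety in the write-up.
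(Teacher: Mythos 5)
Your proposal is correct and follows essentially the same route as the paper: the paper's proof likewise observes that for $m\geq\max\{2,\chi_{DP}(G)\}$ one has $0<1-1/m<1$ and $P_{DP}(G,m)>0$, and then cites Theorem \ref{3}. Your write-up merely makes explicit the (standard) justification that $m\geq\chi_{DP}(G)$ forces $P_{DP}(G,m)\geq 1$, which the paper leaves implicit.
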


\begin{proof} When $m\geq \max\{2,\chi_{DP}(G)\}$, we have $0<1-1/m<1$ and $P_{DP}(G,m)>0$,  the corollary is straightforward from Theorem \ref{3}.
\end{proof}

By using Corollary \ref{4}, we give a new proof of Theorem \ref{t} as follow.

\begin{proof}({\bf proof of Theorem \ref{t}}) Let $T$ be a spanning tree of $G$, then $P_{DP}(T,m)=m(m-1)^n$ for all $m\in \mathbb{N}$. From Proposition 2.3 in \cite{Abe}, $\chi_{DP}(G)\leq 2$ if and only if $G$ is a tree. We discuss the two cases as follow.

{\bf Case 1} $|E(T)|=|E(G)|$.
In this case, $G\cong T$, $P_{DP}(G,m)=m(m-1)^{n-1}$  for all $m\in \mathbb{N}$.

{\bf Case 2} $|E(T)|<|E(G)|$.
In this case $G$ is not a tree, so $\max\{2,\chi_{DP}(G)\}=\chi_{DP}(G)\geq 3$.
\\When $m\geq \chi_{DP}(G)$, then $P_{DP}(G,m)< P_{DP}(T,m)$ from Corollary \ref{4}. When $2\leq m< \chi_{DP}(G)$, then $P_{DP}(G,m)=0< m(m-1)^{n-1}$. When $m=1$, then $P_{DP}(G,m)=0\leq m(m-1)^{n-1}$.

Summarizing the above, the theorem follows.
\end{proof}

\begin{theorem}\label{6}
Let $G$ be a graph with $n$ vertices, $w\in V(G)$ and $d_G(w)=d$, then for all $m\in \mathbb{N}$,$$P_{DP}(G,m)\leq m(1-\frac{1}{m})^dP_{DP}(G-\{w\},m).$$
\end{theorem}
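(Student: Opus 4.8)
The plan is to mimic the random-extension argument of Theorem \ref{3}, but now \emph{adding back} the vertex $w$ together with its $d$ incident edges to a cover of $G-\{w\}$, rather than adding a single edge. Let $\mathcal{H}=(L,H)$ be an arbitrary full $m$-fold cover of $G-\{w\}$, put $t=P_{DP}(G-\{w\},\mathcal{H})$, and let $\mathcal{I}=\{I_1,\dots,I_t\}$ be its set of $\mathcal{H}$-colorings. I would extend $\mathcal{H}$ to a cover $\mathcal{H}'=(L',H')$ of $G$ by introducing a new part $L(w)$ of size $m$ (made complete in $H'$) and, for each neighbor $x\in N_G(w)$, joining $L(w)$ to $L(x)$ by a perfect matching $M_x$ chosen uniformly at random from the $m!$ possibilities, the $d$ matchings being mutually independent. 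Then $\mathcal{H}'$ is a full $m$-fold cover of $G$.

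The key structural observation is that every $\mathcal{H}'$-coloring of $G$ decomposes uniquely as $I\cup\{a\}$ with $I$ an $\mathcal{H}$-coloring of $G-\{w\}$ and $a\in L(w)$: since the parts $L(x)$ partition $V(H')$ and each is a clique, an independent set of size $n$ meets every part exactly once, and by cover property $(3)$ the chosen $a\in L(w)$ can only clash with those vertices of $I$ lying in the parts $L(x)$ with $x\in N_G(w)$. Hence, writing $I_x$ for the unique vertex of $I$ in $L(x)$, the number of valid extensions of a fixed $I$ is $N(I)=\#\{a\in L(w): M_x(a)\neq I_x \text{ for all } x\in N_G(w)\}$, and $P_{DP}(G,\mathcal{H}')=\sum_{I\in\mathcal{I}}N(I)$.

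Next I would take expectations over the random matchings. For a fixed $a\in L(w)$ and a fixed neighbor $x$, a uniformly random perfect matching sends $a$ to $I_x$ with probability $1/m$, so $Pr[M_x(a)\neq I_x]=1-1/m$; by independence of the $d$ matchings, $a$ avoids all the $I_x$ with probability $(1-1/m)^d$. Linearity of expectation then gives $E[N(I)]=m(1-1/m)^d$ for each $I$, whence
$$E[P_{DP}(G,\mathcal{H}')]=\sum_{I\in\mathcal{I}}E[N(I)]=m\Big(1-\frac{1}{m}\Big)^d P_{DP}(G-\{w\},\mathcal{H}).$$
Since a nonnegative random variable attains some value at most its mean, there is a realization of the matchings yielding a full cover $\mathcal{H}'$ of $G$ with $P_{DP}(G,\mathcal{H}')\leq m(1-1/m)^d P_{DP}(G-\{w\},\mathcal{H})$, and therefore $P_{DP}(G,m)\leq m(1-1/m)^d P_{DP}(G-\{w\},\mathcal{H})$. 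Finally, letting $\mathcal{H}$ range over all full covers of $G-\{w\}$ and using $m(1-1/m)^d\geq 0$ gives the claimed bound.

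I would expect the only genuinely delicate points to be bookkeeping rather than conceptual: verifying that the extension $\mathcal{H}'$ really is a full $m$-fold cover (the four cover axioms, in particular property $(3)$, which guarantees $a$ cannot clash outside $N_G(w)$), and justifying the clean product $(1-1/m)^d$ through the independence of the matchings across distinct neighbors. As in Theorem \ref{3}, the passage from ``arbitrary $\mathcal{H}$'' to the minimum relies on $P_{DP}(G-\{w\},m)$ being realized by a full cover, the convention already in force in the preceding proof.
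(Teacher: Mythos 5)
Your proposal is correct and follows essentially the same route as the paper's proof: extend an arbitrary full $m$-fold cover of $G-\{w\}$ by a new part $L(w)$ joined to each $L(v_i)$, $v_i\in N_G(w)$, via independent uniformly random perfect matchings, compute the expected number of valid extensions per coloring as $m(1-\frac{1}{m})^d$ by linearity of expectation, and conclude via a realization at most the mean together with the arbitrariness of the starting cover. Your explicit verification of the decomposition of $\mathcal{H}'$-colorings and of the reduction to full covers only makes precise steps the paper leaves implicit.
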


\begin{proof}Suppose that $\mathcal{H'}=(L',H')$ is an arbitrary full $m$-fold cover of $G-\{w\}$, and $N_G(w)=\{v_1,\dots,v_d\}$. Let $L(x)=L'(x)$ for all $x\in V(G-\{w\})$, $L(w)=\{(w,i): i\in [m]\}$, and $E(H)=E(H')\cup (\cup_{i=1}^{d}E_H(L(w),L(v_i)))$ where for each $i\in [d]$, $E_{H}(L(w),L(v_i))$ is a perfect matching between $L(w)$ and $L(v_i)$ chosen uniformly at random from the $m!$ possible perfect matchings for each $i\in [d]$, then $\mathcal{H}=(L,H)$ is a full $m$-fold cover of $G$. Let $\Omega$ be the family of all $\mathcal{H}=(L,H)$.
	
Let $t=P_{DP}(G-\{w\},\mathcal{H'})$ and $\mathcal{I'}=\{I_1',\ldots,I_{t}'\}$ be the set of all $\mathcal{H'}$-colorings of $G-\{w\}$. In $H$, we denote $X(I_i')$  the number of vertices in $L(w)$ that is not adjacent to any vertices in $I_i'$, then $$P_{DP}(G, \mathcal{H})=\sum_{i=1}^{t}X(I_i').$$ Notice that $d_G(w)=d$, for each vertex $u\in L(w)$, in $H$ the probability that $u$ is not adjacent to any vertices in $I_i'$ is $(1-\frac{1}{m})^d$, so $$E_{\mathcal{H}\in \Omega}[X(I_i')]=m(1-\frac{1}{m})^d.$$
Then, by the linearity of expectation, we have  $$E_{\mathcal{H}\in \Omega}[P_{DP}(G, \mathcal{H})]=\sum_{i=1}^{t}E_{\mathcal{H}\in \Omega}[X(I_i')]=m(1-\frac{1}{m})^dP_{DP}(G-\{w\},\mathcal{H'}).$$
Finally, combining the arbitrariness of $\mathcal{H'}=(L', H')$, we have $$P_{DP}(G,m)\leq m(1-\frac{1}{m})^dP_{DP}(G-\{w\},m).$$
The proof is complete.
\end{proof}

By using Theorem \ref{6}, we give a new proof of Lemma \ref{up0}, along with another new proof of Theorem \ref{t} as follows.

\begin{proof}({\bf Another proof of Theorem \ref{t}})
Let $V(G)=\{v_1,\dots,v_n\}$, $G_n=G$, $G_i=G_{i+1}-\{v_{i+1}\}$ where $i\in [n-1]$, then $G_1$ is a graph with one vertex and no edges. By Theorem \ref{6}, for each $i\in [n-1]$, we have
$$P_{DP}(G_i,m)\leq m(1-\frac{1}{m})^{d_i}P_{DP}(G_{i-1},m),$$ in which $d_i=d_{G_i}(v_i).$ Then,
$$P_{DP}(G,m)\leq m^{n-1}(1-\frac{1}{m})^{\sum_{i=2}^nd_i}P_{DP}(G_1,m)=m^n(1-\frac{1}{m})^{\sum_{i=1}^nd_i}.$$
For $\sum_{i=1}^nd_i=|E(G)|$,  we have
$$P_{DP}(G,m)\leq \frac{m^{\left|V(G)\right|}(m-1)^{\left|E(G)\right|}}{m^{\left|E(G)\right|}}.$$

If $G$ is a connected graph with $n$ vertices, then $\mid E(G)\mid\geq n-1$, with equality holds if and only if $G$ is a tree.
Hence
$$P_{DP}(G,m)\leq m(m-1)^{n-1},$$ combining with that $\chi_{DP}(G)\leq 2$ if and only if $G$ is a tree,  the proof is complete.
\end{proof}	

Next we focus on the upper bounds of DP color function for $2$-connected graphs.

An {\it ear} of a graph $G$ is a maximal path whose internal vertices have degree $2$ in $G$. An {\it ear decomposition} of $G$ is a decomposition $Q_0,\ldots,Q_k$ such that $Q_0$ is a cycle and $Q_i$ for $i\geq 1$ is an ear of \ $Q_0\cup\cdots \cup Q_{i-1}$. It is well known that every $2$-connected graph has an ear decomposition.

\begin{theorem}[\cite{West01}, Theorem 4.2.8]\label{ear} A graph is $2$-connected if and only if it has an ear decomposition. Furthermore, every cycle in a $2$-connected graph is the initial cycle in some ear decomposition.
\end{theorem}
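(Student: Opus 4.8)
The plan is to prove the equivalence in two directions and then read off the ``furthermore'' clause directly from the construction used in the forward direction. Throughout I write $H_i = Q_0 \cup \cdots \cup Q_i$ for the subgraph built after attaching the first $i$ ears, and I record that, since we work with simple graphs, each ear is a path with two \emph{distinct} endpoints lying in the previously built subgraph and with all internal vertices new.

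First I would handle the easy direction: an ear decomposition forces $2$-connectivity. The base case is that the initial cycle $Q_0$ is $2$-connected. For the inductive step I would check that attaching an ear $Q_i$ to a $2$-connected $H_{i-1}$ again yields a $2$-connected graph, by verifying that no single vertex deletion disconnects $H_i$: if the deleted vertex is internal to $Q_i$, the rest of $Q_i$ stays attached to the intact $H_{i-1}$ through its endpoints, and if the deleted vertex lies in $H_{i-1}$, then $H_{i-1}$ minus that vertex is still connected and the internal vertices of $Q_i$ remain joined to it through whichever endpoint was not deleted. Induction then gives that $H_k = G$ is $2$-connected.

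For the converse I would use an extremal argument. Since a $2$-connected graph has minimum degree at least $2$, it contains a cycle; fix one and call it $Q_0$. Among all subgraphs reachable from $Q_0$ by repeatedly attaching ears, let $H$ be a maximal one, and aim to show $H = G$. Suppose not. If some edge of $G$ has both endpoints in $V(H)$ but is absent from $H$, that edge is by itself an ear, contradicting maximality. Otherwise $V(H) \neq V(G)$, and connectivity of $G$ supplies an edge $xy$ with $x \in V(H)$ and $y \notin V(H)$; here I invoke $2$-connectivity through the fact that $G - x$ is connected, so there is a path in $G - x$ from $y$ to $V(H) \setminus \{x\}$. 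Truncating this path at its first return to $V(H)$, say at a vertex $z \neq x$, and prepending the edge $xy$ produces a path from $x$ to $z$ whose internal vertices all lie outside $H$: a genuine ear. This again contradicts maximality, so $H = G$ and the attaching order is the required ear decomposition.

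The step I expect to be the main obstacle is the production of the ear in the converse: one must be sure the path furnished by connectivity of $G - x$ can be cut down to a path meeting $V(H)$ only at its two endpoints and having at least one new internal vertex, so that it is a legitimate ear and not a mere chord already present. This is exactly why $2$-connectivity (rather than mere connectivity) is needed, since it guarantees the return vertex $z$ can be chosen different from $x$. Finally, the ``furthermore'' clause requires no extra work: the cycle $Q_0$ in the converse construction was arbitrary, so any prescribed cycle of $G$ can serve as the initial cycle of an ear decomposition.
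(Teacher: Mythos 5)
The paper gives no proof of Theorem~\ref{ear}: it is quoted as a classical result (Whitney's ear decomposition theorem) directly from West's textbook, which is why no argument appears in the text. Your proposal is correct and is essentially the standard proof found in the cited source -- induction on ears for sufficiency, and for necessity a maximal ear-reachable subgraph $H$ together with $2$-connectivity of $G-x$ to extract a genuine ear (including single-edge ears, consistent with the paper's later use of ears of length $l_i+1$ with $l_i\geq 0$), with the ``furthermore'' clause following because the initial cycle was arbitrary.
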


In order to get the upper bound for the DP color function of $2$-connected graphs, we first consider the DP color function of  the graph obtained by adding an ear to a graph, then combine it with the ear decomposition of $2$-connected graphs, the result follows.

\begin{theorem}\label{2}
Let $G$ be a graph with $n$ vertices and $u,v$ be two distinct vertices in  $V(G)$.  If $G'$ is a graph obtained by adding an ear
$uw_{1}\dots w_{l}v$ of length $l+1$ $(l\geq 0)$ to $G$, then for all $m\in \mathbb{N}$,
$$P_{DP}(G',m)\leq P_{DP}(G,m)\frac{(m-1)^{l+1}}{m}.$$
\end{theorem}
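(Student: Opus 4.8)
The plan is to mirror the probabilistic arguments used in the proofs of Theorems \ref{3} and \ref{6}. I would start with an arbitrary full $m$-fold cover $\mathcal{H}=(L,H)$ of $G$ and build a random full $m$-fold cover $\mathcal{H}'=(L',H')$ of $G'$ by keeping the cover on the $G$-part intact and attaching the ear. Concretely, set $L'(x)=L(x)$ for all $x\in V(G)$, introduce fresh cliques $L'(w_i)=\{(w_i,j):j\in[m]\}$ with $H'[L'(w_i)]$ complete for $i\in[l]$, and join consecutive layers along the ear $L(u),L'(w_1),\ldots,L'(w_l),L(v)$ by $l+1$ independent, uniformly random perfect matchings $M_0,\ldots,M_l$. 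Every edge of $H'$ that is not already in $H$ meets one of the new layers, so $H'$ restricted to the $G$-part is exactly $H$; consequently restricting any $\mathcal{H}'$-coloring of $G'$ to $V(G)$ yields an $\mathcal{H}$-coloring of $G$, and conversely each $\mathcal{H}'$-coloring is such a coloring together with a compatible choice on the ear.

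Next I would set up the counting. Fix an $\mathcal{H}$-coloring $I$ of $G$; it selects a vertex $a\in L(u)$ and a vertex $b\in L(v)$. The extensions of $I$ to $\mathcal{H}'$-colorings of $G'$ are exactly the tuples $(c_1,\ldots,c_l)$ with $c_i\in L'(w_i)$ that, writing $c_0=a$ and $c_{l+1}=b$, satisfy $c_i\not\sim c_{i+1}$ in $M_i$ for every $i\in\{0,1,\ldots,l\}$. Letting $N_I$ be the number of such tuples, the bijection between $\mathcal{H}'$-colorings of $G'$ and pairs (coloring $I$, extension of $I$) gives $P_{DP}(G',\mathcal{H}')=\sum_I N_I$.

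The heart of the argument is the expectation computation, and I expect this factorization to be the crucial step. Writing $N_I=\sum_{(c_1,\ldots,c_l)}\prod_{i=0}^{l}\mathbf{1}[c_i\not\sim c_{i+1}]$ and using that the matchings $M_0,\ldots,M_l$ are \emph{independent} (so the corresponding indicators are independent), the expectation of each product factors as $\prod_{i=0}^{l}\Pr[c_i\not\sim c_{i+1}]$. For a uniform random perfect matching between two distinct $m$-element layers a fixed pair is matched with probability $1/m$, so each factor equals $(m-1)/m$; crucially this value is the same for every tuple and does not depend on $a$ or $b$. Summing over the $m^{l}$ tuples gives $E[N_I]=m^{l}\bigl((m-1)/m\bigr)^{l+1}=(m-1)^{l+1}/m$, and by linearity of expectation $E[P_{DP}(G',\mathcal{H}')]=\frac{(m-1)^{l+1}}{m}\,P_{DP}(G,\mathcal{H})$.

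Finally I would conclude by the standard averaging step: since the minimum over the (finitely many) covers in the support is at most the average, some cover $\mathcal{H}'$ satisfies $P_{DP}(G',\mathcal{H}')\le\frac{(m-1)^{l+1}}{m}P_{DP}(G,\mathcal{H})$, whence $P_{DP}(G',m)\le\frac{(m-1)^{l+1}}{m}P_{DP}(G,\mathcal{H})$. As $\mathcal{H}$ was an arbitrary full cover of $G$ and $P_{DP}(G,m)$ is attained on a full cover, minimizing over $\mathcal{H}$ yields the claimed bound. The one point needing care is the legitimacy of restricting to full covers and the genuine independence of the matchings along the path; these are routine, and the real substance is recognizing that the per-coloring expectation is a constant, so the sum collapses to $P_{DP}(G,\mathcal{H})$ times the single factor $(m-1)^{l+1}/m$.
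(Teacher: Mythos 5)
Your proof is correct and follows essentially the same route as the paper: the identical random construction of $\mathcal{H}'$ by attaching $l+1$ independent uniformly random perfect matchings along the ear to an arbitrary full cover $\mathcal{H}$ of $G$, followed by linearity of expectation over the $\mathcal{H}$-colorings and the averaging step. The only difference is in the local computation of the expected number of extensions per coloring: the paper counts choices sequentially and conditions on whether $(v,i)$ and $(w_{l-1},j)$ have a common neighbor in $L(w_l)$ (giving $m-1$ vs.\ $m-2$ choices with probabilities $\tfrac{1}{m}$ and $1-\tfrac{1}{m}$), whereas you sum over all $m^l$ tuples and factor the product of independent indicators, arriving at the same value $\frac{(m-1)^{l+1}}{m}$ in a slightly cleaner way.
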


\begin{proof}
When $l=0$, the result follows from Theorem \ref{3}. We assume $l\geq 1$ in the following.
Suppose that $\mathcal{H}=(L,H)$ is an arbitrary full $m$-fold cover of $G$ in which $L(x)=\{(x,i):i\in [m]\}$ for each $x\in V(G)$. Let $G^*=G'-\{w_{l}v\}$, i.e., the graph $G^*$ is obtained by adding a path $P=uw_{1}\dots w_{l}$ to $G$.
Let $L^*(x)=L(x)$ for each $x\in V(G^*)-\{w_1,\ldots, w_l\}$, $L^*(x)=\{(x,i):i\in [m]\}$ for each $x\in \{w_1,\ldots, w_l\}$, and $$H^*=H+E(L^*(u),L^*(w_1))+\sum_{i=1}^{l-1}E(L^*(w_{i}),L^*(w_{i+1}))$$ where $E(L^*(u),L^*(w_1))$ ($E(L^*(w_{i}),L^*(w_{i+1}))$, respectively) is a perfect matching between $L^*(u)$ and $L^*(w_1)$ ($L^*(w_i)$ and $L^*(w_{i+1})$, respectively) chosen uniformly at random from all possible perfect matchings,  then $\mathcal{H^*}=(L^*,H^*)$ is a full $m$-fold cover of $G^*$.

From Proposition 21 in \cite{Kaul2021} and Lemma 19 in \cite{BHK22}, one can get that $$P_{DP}(G^*,\mathcal{H^*})=P_{DP}(G,\mathcal{H})(m-1)^{l}.$$
From the arbitrariness of $\mathcal{H}=(L,H)$, we have
$$P_{DP}(G^*,m)= P_{DP}(G,m)(m-1)^{l}.$$ Because $G'=G^*+\{w_{l}v\}$ and Theorem \ref{3}, we have
$$P_{DP}(G',m)\leq P_{DP}(G^*,m)\frac{m-1}{m}= P_{DP}(G,m)\frac{(m-1)^{l+1}}{m}.$$
The proof is completed.
\end{proof}

In \cite{Kaul2021}, Kaul and Mudrock computed the DP color function of the unicyclic graph (i.e., a connected graph containing exactly one cycle) with $n$ vertices, so the DP color function of the cycle with $n$ vertices can be deduced.

\begin{lemma}[\cite{Kaul2021}, Theorem 11]\label{cycle} Let $C_n$ be the cycle with $n$ vertices.

\noindent$(i)$ If $n$ is odd, then for all $m\in \mathbb{N}$,
$$P_{DP}(C_n,m)=(m-1)^n-(m-1).$$

\noindent$(i)$ If $n$ is even, then for all $m\in \mathbb{N}$ and $m\geq 2$,
$$P_{DP}(C_n,m)=(m-1)^n-1.$$
 \end{lemma}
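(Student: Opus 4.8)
The statement to prove is Lemma~\ref{cycle}, which computes $P_{DP}(C_n,m)$ for odd and even cycles.

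\medskip

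My plan is to compute $P_{DP}(C_n,m)$ by combining a lower bound that holds for every $m$-fold cover with a matching upper bound realized by a specific cover. For the lower bound, I would fix an arbitrary full $m$-fold cover $\mathcal{H}=(L,H)$ of $C_n=v_1v_2\cdots v_nv_1$ and count $\mathcal{H}$-colorings by a transfer-matrix argument along the path. Order the vertices $v_1,\ldots,v_n$ and build up a coloring greedily: there are $m$ choices for the color at $v_1$, and for each of $v_2,\ldots,v_{n-1}$ there are exactly $m-1$ valid extensions, since each is joined to its predecessor by a perfect matching. The only constraint that can fail to be "generic" is the final one closing the cycle, namely the compatibility between the chosen color at $v_{n}$ and the color at $v_1$. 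So the count of $\mathcal{H}$-colorings equals the number of closed walks of length $n$ in an auxiliary structure on the color set $[m]$, which I would encode as $\mathrm{tr}(M_1M_2\cdots M_n)$ where each $M_i$ is an $m\times m$ permutation matrix recording the matching $E_H(L(v_i),L(v_{i+1}))$.

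\medskip

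The heart of the argument is then to minimize this trace over all choices of the permutation matrices, equivalently over the product permutation $\sigma=\sigma_n\cdots\sigma_1$. A standard computation gives
\begin{equation*}
P_{DP}(C_n,\mathcal{H})=\mathrm{tr}\bigl((A-I)^{?}\bigr),
\end{equation*}
but more precisely I would show, by inclusion--exclusion on which edges of the cycle are ``properly colored,'' that the number of $\mathcal{H}$-colorings equals
\begin{equation*}
(m-1)^n+(-1)^n \operatorname{fix}(\sigma)
\end{equation*}
in a suitable normalization, where $\operatorname{fix}(\sigma)$ counts fixed points of the composite permutation $\sigma$ determined by the cover. This reduces the whole problem to minimizing, over permutations $\sigma$ of $[m]$, the quantity $(m-1)^n+(-1)^n\operatorname{fix}(\sigma)$. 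When $n$ is even the sign is $+$, so I want $\operatorname{fix}(\sigma)$ as small as possible, and a derangement ($\operatorname{fix}(\sigma)=0$) exists for every $m\ge 2$, giving the value $(m-1)^n-1$ after accounting for the trivial overcount; when $n$ is odd the sign is $-$, so I want $\operatorname{fix}(\sigma)$ as large as possible subject to the cover actually admitting colorings, and the extremal choice yields $(m-1)^n-(m-1)$.

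\medskip

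The matching upper bound is the easier half: I would exhibit explicit covers achieving each target value. For the even case, realize a composite permutation that is a fixed-point-free involution (or any derangement), verify that the resulting cover has exactly $(m-1)^n-1$ independent transversals by a direct recurrence, and note this forces $m\ge 2$ so that a derangement of $[m]$ exists. For the odd case, take the \emph{canonical} cover, for which every matching is the identity; then $P_{DP}(C_n,\mathcal{H})=P(C_n,m)=(m-1)^n+(-1)^n(m-1)=(m-1)^n-(m-1)$ for odd $n$, matching the chromatic polynomial, and one checks no cover can do better. The main obstacle I anticipate is making the minimization rigorous: I must argue that not every permutation $\sigma$ is achievable by an independent transversal count in the way the crude inclusion--exclusion suggests, and in particular that in the odd case the genuinely attainable minimum of $(m-1)^n-\operatorname{fix}(\sigma)$ respects the parity obstruction that forbids $\operatorname{fix}(\sigma)=m$ from giving a spuriously small value. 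Carefully tracking how the product of matchings around an odd versus even cycle constrains the achievable fixed-point counts is the step where the two cases genuinely diverge, and it is where I would spend the most care.
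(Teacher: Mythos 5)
This lemma is not proved in the paper at all --- it is imported from Kaul and Mudrock \cite{Kaul2021} (as a consequence of their Theorem~11 on unicyclic graphs) --- so your attempt can only be measured against the standard argument, which is in fact the route you chose: gauge away all but one matching so that a full cover of $C_n$ is encoded by a single composite ``holonomy'' permutation $\sigma=\sigma_n\circ\cdots\circ\sigma_1$ of $[m]$, express the number of $\mathcal{H}$-colorings in terms of $\operatorname{fix}(\sigma)$, and optimize over $\sigma$. Two small repairs first: since $P_{DP}$ minimizes over \emph{all} $m$-fold covers, you must remark that adding matching edges never increases the count, so the minimum is attained among full covers; and your transfer matrices should be the complements $J-P_{\sigma_i}$ of the permutation matrices ($J$ the all-ones matrix), not the matrices $P_{\sigma_i}$ themselves, since an edge of the matching \emph{forbids} a pair of colors.

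The genuine gap is that the pivotal formula is never pinned down --- you leave it as ``$(m-1)^n+(-1)^n\operatorname{fix}(\sigma)$ in a suitable normalization'' --- and the obstacle you then plan to spend the most care on is a phantom created by exactly that vagueness. Carrying the computation out: the count is $\mathrm{tr}\bigl((J-I)^{n-1}(J-P_\sigma)\bigr)$, and since $(J-I)^{k}=\frac{(m-1)^{k}-(-1)^{k}}{m}J+(-1)^{k}I$, one gets the exact identity
\[
P_{DP}(C_n,\mathcal{H})=(m-1)^n+(-1)^n\bigl(\operatorname{fix}(\sigma)-1\bigr),
\]
valid for every full cover. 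Moreover \emph{every} $\sigma\in S_m$ is achievable (take all matchings to be identities except one edge carrying $\sigma$), so there is no achievability restriction and no ``parity obstruction'' whatsoever: the minimization is free over $S_m$, and the two cases differ only in the sign $(-1)^n$. For $n$ even one takes a derangement, $\operatorname{fix}(\sigma)=0$ (this is where $m\ge2$ enters), giving $(m-1)^n-1$; for $n$ odd one takes $\sigma=\mathrm{id}$, $\operatorname{fix}(\sigma)=m$ (the canonical cover), giving $(m-1)^n-(m-1)$, and since $\operatorname{fix}(\sigma)\le m$ no cover does better. Your worry that $\operatorname{fix}(\sigma)=m$ would give a ``spuriously small value'' in the odd case is precisely the off-by-one in your unstated normalization: with the correct constant $\operatorname{fix}(\sigma)-1$ rather than $\operatorname{fix}(\sigma)$, the identity permutation gives exactly the claimed bound. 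So the architecture of your proof is sound and matches the cited one, but as written the step you flagged as the heart of the argument would have sent you hunting for a constraint that does not exist instead of fixing the count.
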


Now we are ready to get a tight upper bound for the DP color function of $2$-connected graphs.
\begin{theorem}\label{5}
Let $G$ be a $2$-connected graph with $n$ vertices and $G_0$ be a cycle of length $l_0$ in $G$.

\noindent$(i)$ If $G_0$ is an odd cycle, then for all $m\in\mathbb{N}$ and $m\geq 3$
$$P_{DP}(G,m)\leq (m-1)^n-(m-1)^{n-l_{0}+1},$$
where equality holds if and only if $G\cong G_0$.

\noindent$(ii)$ If $G_0$ is an even cycle, then for all $m\in\mathbb{N}$ and $m\geq 3$,
$$P_{DP}(G,m)\leq (m-1)^n-(m-1)^{n-l_{0}},$$
where equality holds if and only if $G\cong G_0$.
\end{theorem}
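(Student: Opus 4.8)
The plan is to peel $G$ back to the cycle $G_0$ through an ear decomposition and then multiply together the one-ear bounds of Theorem \ref{2}. By Theorem \ref{ear}, since $G_0$ is a cycle in the $2$-connected graph $G$, there is an ear decomposition $Q_0,Q_1,\ldots,Q_k$ of $G$ whose initial cycle is $Q_0=G_0$. I would set $G^{(0)}=G_0$ and $G^{(i)}=G^{(i-1)}\cup Q_i$, so that $G^{(k)}=G$, and let $l_i$ be the number of internal (degree-two) vertices that $Q_i$ contributes, i.e.\ $Q_i$ has length $l_i+1$ with $l_i\geq 0$. Since the ears partition $V(G)\setminus V(G_0)$ into their internal vertices, this bookkeeping gives $\sum_{i=1}^k l_i = n-l_0$; in particular $G\cong G_0$ precisely when $k=0$.

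Next, each ear $Q_i$ (for $i\geq 1$) joins two \emph{distinct} vertices of $G^{(i-1)}$, so I can apply Theorem \ref{2} when $l_i\geq 1$ and Theorem \ref{3} in the single-edge case $l_i=0$ (which produces the same factor $\frac{m-1}{m}=\frac{(m-1)^{0+1}}{m}$). In either case
$$P_{DP}(G^{(i)},m)\leq P_{DP}(G^{(i-1)},m)\,\frac{(m-1)^{l_i+1}}{m}.$$
Multiplying over $i=1,\ldots,k$ and using $\sum_{i=1}^k l_i=n-l_0$, I obtain
$$P_{DP}(G,m)\leq P_{DP}(G_0,m)\,\frac{(m-1)^{\,n-l_0+k}}{m^k}.$$

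I would then insert the value of $P_{DP}(G_0,m)$ from Lemma \ref{cycle}. The clean point is that both parities factor uniformly: substituting $P_{DP}(G_0,m)=(m-1)^{l_0}-(m-1)$ in the odd case (resp.\ $(m-1)^{l_0}-1$ in the even case) turns the bound above into exactly $\left(\frac{m-1}{m}\right)^{k}$ times the target quantity $(m-1)^n-(m-1)^{n-l_0+1}$ (resp.\ $(m-1)^n-(m-1)^{n-l_0}$). Because $0\leq\frac{m-1}{m}\leq 1$, the factor $\left(\frac{m-1}{m}\right)^{k}$ is at most $1$, which establishes the inequalities in both (i) and (ii).

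For the equality characterization I would reason as follows. If $G\cong G_0$ then $k=0$ and the chain bound collapses to the exact value of $P_{DP}(G_0,m)$ from Lemma \ref{cycle}, so equality holds. Conversely, if $G\not\cong G_0$ then $k\geq 1$, whence $\left(\frac{m-1}{m}\right)^{k}<1$ for every finite $m$; the inequality is therefore strict whenever the target quantity is positive, which is exactly the range $m\geq 3$, while for $m\leq 2$ the target vanishes and both sides equal $0$. Thus, in the meaningful range, equality forces $k=0$, i.e.\ $G\cong G_0$. The work here is mostly bookkeeping rather than a genuine obstacle: the points needing care are verifying that every ear after $Q_0$ is an open ear with two distinct endpoints so that Theorems \ref{2} and \ref{3} genuinely apply, tracking the exponent $\sum_{i=1}^k l_i=n-l_0$ correctly across the two ear-length regimes $l_i=0$ and $l_i\geq 1$, and separating the degenerate small-$m$ values where the upper bound is $0$ and equality holds trivially from the range $m\geq 3$ where equality truly characterizes $G\cong G_0$.
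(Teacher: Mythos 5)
Your proof is correct and follows essentially the same route as the paper's: an ear decomposition with initial cycle $G_0$ via Theorem \ref{ear}, the per-ear factor $\frac{(m-1)^{l_i+1}}{m}$ from Theorems \ref{2} and \ref{3}, the exact value of $P_{DP}(G_0,m)$ from Lemma \ref{cycle}, the comparison $m^k\geq (m-1)^k$, and equality forced at $k=0$. Your explicit handling of the degenerate range $m\leq 2$, where both sides vanish, is in fact slightly more careful than the paper's treatment of the equality case, but it is a refinement rather than a different argument.
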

\begin{proof}
Since $G$ is a $2$-connected graph, $G$ contains a cycle, and the DP-chromatic number of a cycle is three.
From Theorem \ref{ear}, $G$ has an ear decomposition $Q_0,\ldots,Q_k$ such that $Q_0\cong G_0$ is the cycle of length $l_0$ and $Q_i$ is an ear of $Q_0\cup\cdots \cup Q_{i-1}$ for $i\geq 1$. Suppose that ear $Q_i$ has length $l_i+1$ $(l_i\geq 0)$ for $1\leq i\leq k$, then we have $\sum_{i=0}^{k}l_{i}=n$.

By Theorems \ref{2} and Lemma \ref{cycle},
if $G_0$ is an odd cycle,
\begin{eqnarray*}
P_{DP}(G,m)&\leq& P_{DP}(G_{0},m)\prod_{i=1}^{k}\frac{(m-1)^{l_{i}+1}}{m}\\
&=&\big((m-1)^{l_{0}}-(m-1)\big)\frac{(m-1)^{n-l_{0}+k}}{m^k}\\
&=&\frac{(m-1)^{n+k}-(m-1)^{n-l_{0}+k+1}}{m^k}\\
&\leq& \frac{(m-1)^{n+k}-(m-1)^{n-l_{0}+k+1}}{(m-1)^k}\\
&=&(m-1)^n-(m-1)^{n-l_{0}+1},
\end{eqnarray*}
where the next to the last equalities hold if and only if $k=0$, i.e., $G\cong G_0$ is an $n$-vertex odd cycle.
With a similar argument, if $G_0$ is an even cycle,
\begin{eqnarray*}
P_{DP}(G,m)&\leq& P_{DP}(G_{0},m)\prod_{i=1}^{k}\frac{(m-1)^{l_{i}+1}}{m}\\
&=&\big((m-1)^{l_{0}}-1\big)\frac{(m-1)^{n-l_{0}+k}}{m^k}\\
&=&\frac{(m-1)^{n+k}-(m-1)^{n-l_{0}+k}}{m^k}\\
&\leq& \frac{(m-1)^{n+k}-(m-1)^{n-l_{0}+k}}{(m-1)^k}\\
&=&(m-1)^n-(m-1)^{n-l_{0}},
\end{eqnarray*}
where equality holds if and only if $G\cong G_0$ is an $n$-vertex even cycle.
The proof is completed.
\end{proof}

\begin{theorem}Let $G$ be a $2$-connected graph with $n$ vertices.

\noindent$(i)$ If $n$ is odd, then for all $m\in\mathbb{N}$ and $m\geq 3$,
$$P_{DP}(G,m)\leq (m-1)^n-(m-1),$$
where equality holds if and only if $G$ is an odd cycle with $n$ vertices.

\noindent$(ii)$ If $n$ is even, then for all $m\in\mathbb{N}$ and $m\geq 3$,
$$P_{DP}(G,m)\leq (m-1)^n-1,$$
where equality holds if and only if $G$ is an even cycle with $n$ vertices.
\end{theorem}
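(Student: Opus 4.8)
The plan is to obtain this theorem as a direct corollary of Theorem \ref{5}, by choosing the initial cycle $G_0$ according to the parity of $n$ and the bipartiteness of $G$, and then collapsing the exponent with the monotonicity of $k\mapsto(m-1)^k$ (valid for $m\geq 2$). Throughout I use that a $2$-connected graph has at least three vertices and contains a cycle, so Theorem \ref{5} always applies for some admissible $G_0$.

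For part (i), with $n$ odd, I would split into two cases. If $G$ is not bipartite it contains an odd cycle $G_0$ of some length $l_0\leq n$; applying Theorem \ref{5}(i) and using $(m-1)^{n-l_0+1}\geq m-1$ (since $n-l_0+1\geq 1$) gives $P_{DP}(G,m)\leq (m-1)^n-(m-1)^{n-l_0+1}\leq (m-1)^n-(m-1)$. If instead $G$ is bipartite, every cycle is even; picking an even cycle $G_0$ of length $l_0$ and noting $l_0\leq n-1$ (an $n$-cycle would be odd), Theorem \ref{5}(ii) together with $(m-1)^{n-l_0}\geq m-1$ (as $n-l_0\geq 1$) again yields the bound. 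For part (ii), with $n$ even, the structural input I need is that $G$ contains an even cycle: a $2$-connected graph fails to have an even cycle only when it is an odd cycle, which is excluded here because $n$ is even. Choosing such an even cycle $G_0$ of length $l_0\leq n$ and applying Theorem \ref{5}(ii) with $(m-1)^{n-l_0}\geq 1$ gives $P_{DP}(G,m)\leq (m-1)^n-1$.

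For the equality characterizations, the forward direction is immediate from Lemma \ref{cycle}: an odd (resp. even) $n$-cycle attains $(m-1)^n-(m-1)$ (resp. $(m-1)^n-1$). For the converse I would trace back through the chain above: equality in the displayed bound forces equality in the invoked case of Theorem \ref{5}, which holds if and only if $G\cong G_0$, so $G_0$ must be Hamiltonian, i.e.\ $l_0=n$, and $G$ is the corresponding $n$-cycle (odd in (i), even in (ii)). In the bipartite subcase of (i), equality in Theorem \ref{5}(ii) would require $G\cong G_0$ with $l_0\leq n-1$, which is impossible, so the inequality is strict there, consistent with an odd cycle being non-bipartite.

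The main obstacle I anticipate is making the equality clause precise across all $m$. At $m=1$ both the bound and $P_{DP}(G,m)$ vanish, and at $m=2$ every $2$-connected graph has $\chi_{DP}(G)\geq 3$, so $P_{DP}(G,2)=0$ again coincides with the bound; hence equality holds trivially for all $2$-connected $G$ when $m\leq 2$, and the characterization ``$G$ is a cycle'' is sharp only for $m\geq 3$, where $m-1\geq 2$ makes the step $(m-1)^{n-l_0}\geq 1$ strict unless $l_0=n$. I would therefore verify the equality statement for $m\geq 3$ and record the degenerate small-$m$ behavior separately. A secondary point needing justification is the structural claim that a $2$-connected graph other than an odd cycle contains an even cycle; I would prove it from the ear decomposition of Theorem \ref{ear} by a short parity count, observing that attaching any ear of length $p$ to an odd initial cycle splits it into arcs of lengths $q,r$ with $q+r$ odd, whence the two new cycles $p+q$ and $p+r$ have opposite parities and one is even.
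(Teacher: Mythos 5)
Your proposal is correct and follows essentially the same route as the paper, whose entire proof is the one-line observation that every $2$-connected graph contains a cycle so the result follows from Theorem \ref{5}; your case analysis (odd cycle vs.\ bipartite in (i), even cycle in (ii), traced back through the equality clause of Theorem \ref{5}) is exactly the deduction the paper leaves implicit, with the ear-decomposition argument for the existence of an even cycle filling in a structural detail the paper does not spell out. Your flagging of the degenerate equality behavior at $m\leq 2$ is a legitimate refinement of a point the paper (and Theorem \ref{5} itself) glosses over, but it does not change the approach.
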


\begin{proof} Because every $2$-connected graph contains a cycle, the theorem follows from Theorem \ref{5}.
\end{proof}

\section{Canonical labelings of $\mathcal{H}$}

We begin this section by giving examples which gives negative answer to Question \ref{q}. Next we
introduce some conclusions in \cite{Kaul2021} that will be used in our later proof, then we give positive answer to Question \ref{p} for two types of graphs.

Let $G$ and $H$ be two vertex disjoint graphs, the \emph{join} $G\vee H$ of $G$ and $H$ is obtained from $G\cup H$ by joining every vertex of $G$
to every vertex of $H$. The join $C_{n}\vee K_1$ of a cycle with $n$ vertices $C_n$ and a single vertex is called a \emph{wheel} with $n$ spokes and denoted $W_n$. A \emph{theta} graph $\theta(r,s,t)$ $(r\geq 1, s,t\geq 2)$ is a graph obtained by joining two vertices by three internally disjoint
paths of lengths $r,s$ and $t$. For the wheel graph, unicyclic graph, cycle graph and theta graph, their chromatic polynomials can be found in \cite{DKT05}.

\begin{lemma}[\cite{DKT05}]\label{16}$(i)$ For the wheel $W_n$ $(n\geq 3)$, $$P(W_n,m)=m((m-2)^n+(-1)^n(m-2)).$$
$(ii)$ For a unicyclic graph $G$ with $n$ vertices containing a cycle $C_i$  $(i\geq 3)$, $$P(G,m)=(m-1)^{n}+(-1)^i(m-1)^{n-i+1}.$$
$(iii)$ For the $n$-cycle $C_n$ $(n\geq 3)$, $$P(C_n,m)=(m-1)^{n}+(-1)^n(m-1).$$
$(iv)$ For the theta graph $\theta(r,s,t)$ $(r\geq 1, s,t\geq 2)$,
\begin{eqnarray*}
P(G,m)&=&\frac{(m-1)^{r+s+t}+(-1)^{s+t}(m-1)^{r+1}+(-1)^{r+t}(m-1)^{s+1}}{m}\\
&&+\frac{(-1)^{r+s}(m-1)^{t+1}+(-1)^{r+s+t}(m-1)^2+(-1)^{r+s+t+1}(m-1)}{m}.
\end{eqnarray*}	
\end{lemma}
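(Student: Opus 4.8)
The plan is to establish the four formulas by reducing parts $(i)$ and $(ii)$ to the cycle formula $(iii)$, and by handling the theta graph $(iv)$ with a transfer-matrix computation. I would prove $(iii)$ first. Applying deletion--contraction to a single edge $e$ of $C_n$, deleting $e$ produces the path $P_n$ with $P(P_n,m)=m(m-1)^{n-1}$ and contracting $e$ produces $C_{n-1}$, so $P(C_n,m)=m(m-1)^{n-1}-P(C_{n-1},m)$. Starting from $P(C_3,m)=m(m-1)(m-2)$, a routine induction on $n$ then yields $P(C_n,m)=(m-1)^n+(-1)^n(m-1)$.

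Part $(ii)$ follows quickly. A unicyclic graph $G$ on $n$ vertices whose unique cycle is $C_i$ is obtained from $C_i$ by attaching trees at its vertices, i.e., by adding $n-i$ pendant vertices one at a time (building the forest outward from the cycle, each new vertex is a leaf at the moment it is added). Since adjoining a vertex of degree $1$ multiplies the chromatic polynomial by $(m-1)$, I get $P(G,m)=(m-1)^{n-i}P(C_i,m)$; substituting $(iii)$ gives $(m-1)^{n}+(-1)^i(m-1)^{n-i+1}$. For part $(i)$ I would color the hub of $W_n=C_n\vee K_1$ first, in one of $m$ ways; every rim vertex is adjacent to the hub, so the rim cycle $C_n$ must then be colored from the remaining $m-1$ colors. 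Hence $P(W_n,m)=m\,P(C_n,m-1)$, and replacing $m$ by $m-1$ in $(iii)$ produces $m((m-2)^n+(-1)^n(m-2))$.

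The hard part will be $(iv)$. Let $u,v$ be the two degree-$3$ branch vertices joined by internally disjoint paths of lengths $r,s,t$. Once colors are fixed at $u$ and $v$, the internal vertices of the three paths lie in pairwise non-adjacent sets, so the number of proper colorings factors over the three paths given the pair $(c(u),c(v))$. The number of ways to color the internal vertices of a length-$\ell$ path with prescribed endpoint colors is the $(c(u),c(v))$ entry of $(J-I)^\ell$, where $J-I$ is the adjacency matrix of $K_m$. Using $J^2=mJ$ one checks that $(J-I)^\ell=\frac{(m-1)^\ell-(-1)^\ell}{m}J+(-1)^\ell I$, whose diagonal entry is $D_\ell=\frac{(m-1)^\ell+(m-1)(-1)^\ell}{m}$ and whose off-diagonal entry is $O_\ell=\frac{(m-1)^\ell-(-1)^\ell}{m}$. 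Summing over the $m$ pairs with $c(u)=c(v)$ and the $m(m-1)$ pairs with $c(u)\neq c(v)$ gives $P(\theta(r,s,t),m)=m\,D_rD_sD_t+m(m-1)\,O_rO_sO_t$. The remaining step, which I expect to be the genuine obstacle, is purely algebraic: expanding both products and collecting terms in powers of $(m-1)$ should reduce exactly to the stated six-term expression over $m$. A small case such as $\theta(1,2,2)$, which is $K_4$ minus an edge and for which both routes give $m(m-1)(m-2)^2$, serves as a useful check that the bookkeeping is correct.
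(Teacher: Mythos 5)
Your proposal is correct in all four parts; note that the paper itself offers no proof of this lemma---it is quoted as a known result from the book of Dong, Koh and Teo---so your derivation is a self-contained substitute rather than a variant of an argument in the text. Parts $(i)$--$(iii)$ are standard and correctly executed: the deletion--contraction recursion $P(C_n,m)=m(m-1)^{n-1}-P(C_{n-1},m)$ with base $P(C_3,m)=m(m-1)(m-2)$ gives $(iii)$; building a unicyclic graph outward from its cycle by attaching one leaf at a time, each contributing a factor $(m-1)$, gives $(ii)$; and the join identity $P(G\vee K_1,m)=m\,P(G,m-1)$ gives $(i)$. For $(iv)$, your transfer-matrix setup is right, including the boundary case $r=1$, which is handled automatically since $D_1=0$ and $O_1=1$ encode exactly the edge constraint between the two branch vertices. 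Moreover, the final algebra you flag as ``the genuine obstacle'' is in fact routine and does close: writing $w=m-1$ and $\epsilon_\ell=(-1)^\ell$, one has
\[
m\,D_rD_sD_t+m(m-1)\,O_rO_sO_t=\frac{1}{m^{2}}\Bigl[(w^{r}+w\epsilon_r)(w^{s}+w\epsilon_s)(w^{t}+w\epsilon_t)+w(w^{r}-\epsilon_r)(w^{s}-\epsilon_s)(w^{t}-\epsilon_t)\Bigr],
\]
and upon expansion the mixed terms such as $\epsilon_t w^{r+s+1}$ cancel between the two products, the single-exponent terms combine as $\epsilon_s\epsilon_t(w^{r+2}+w^{r+1})=m\,\epsilon_s\epsilon_t\,w^{r+1}$ (and symmetrically in $s,t$), the leading term carries coefficient $1+w=m$, and the last pair gives $\epsilon_r\epsilon_s\epsilon_t(w^{3}-w)=m\,\epsilon_r\epsilon_s\epsilon_t(w^{2}-w)$; dividing by $m^{2}$ yields precisely the six-term expression of the lemma. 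Your sanity check also passes: for $\theta(1,2,2)$ the formula collapses to $w(w^{2}-1)^{2}/m=m(m-1)(m-2)^{2}$, the chromatic polynomial of $K_4$ minus an edge. So the proposal is complete as written, with the expansion above filling in the one step you left open.
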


In a cover $\mathcal{H}=(L,H)$ of a graph $G$, the \emph{cross-edges} are the edges of $H$ connecting distinct parts of the partition $\{L(v):v\in V(G)\}$, we denote $E_c$ the set of all cross-edges in $H$, and denote $H[E_c]$ the edge-induced subgraph of $H$ induced by $E_c$.

\begin{example}Let $\mathcal{H}_1=(L_1,H_1)$ be a $3$-fold cover of $W_4$, $V(W_4)=\{x,y,z,u,v\}$ and $L_1(w)=\{(w,i):i\in[3]\}$ for each $w\in V(W_4)$. If $H_1[E_c]$ is the graph as shown in Figure \ref{figure 1}, then $P_{DP}(W_4,\mathcal{H}_1)=P(W_4,3)=6$. We list all $\mathcal{H}_1$-colorings as follows,
\begin{eqnarray*}
\{(x,1),(u,2),(y,3),(v,2),(z,2)\},&\{(x,2),(u,1),(y,3),(v,1),(z,1)\},\\
\{(x,3),(u,1),(y,2),(v,1),(z,1)\},&\{(x,1),(u,3),(y,2),(v,3),(z,3)\},\\
\{(x,2),(u,3),(y,1),(v,3),(z,3)\}, &\{(x,3),(u,2),(y,1),(v,2),(z,2)\}.
\end{eqnarray*}
But clearly $\mathcal{H}_1$ has no canonical labelings.
\end{example}

\begin{example}Let $\mathcal{H}_2=(L_2,H_2)$ be a $4$-fold cover of $W_4$, $V(W_4)=\{x,y,z,u,v\}$ and $L_2(w)=\{(w,i):i\in[4]\}$ for each $w\in V(W_4)$. If $H_2[E_c]$ is the graph as shown in Figure \ref{figure 2}, then $P_{DP}(W_4,\mathcal{H}_2)=P(W_4,4)=72$.  We list $18$ of them that are all $\mathcal{H}_2$-colorings containing $(x,1)$ as follows,
\begin{eqnarray*}
\{(x,1),(u,2),(y,3),(v,2),(z,1)\},&\{(x,1),(u,2),(y,3),(v,2),(z,4)\},\\
\{(x,1),(u,2),(y,3),(v,4),(z,1)\},&\{(x,1),(u,2),(y,3),(v,4),(z,2)\},\\
\{(x,1),(u,2),(y,4),(v,2),(z,1)\},&\{(x,1),(u,2),(y,4),(v,3),(z,1)\},\\
\{(x,1),(u,2),(y,4),(v,3),(z,2)\},&\{(x,1),(u,3),(y,2),(v,3),(z,1)\},\\
\{(x,1),(u,3),(y,2),(v,4),(z,1)\},&\{(x,1),(u,3),(y,2),(v,4),(z,3)\},\\
\{(x,1),(u,3),(y,4),(v,2),(z,1)\},&\{(x,1),(u,3),(y,4),(v,2),(z,3)\},\\
\{(x,1),(u,3),(y,4),(v,3),(z,1)\},&\{(x,1),(u,3),(y,4),(v,3),(z,2)\},\\
\{(x,1),(u,4),(y,2),(v,3),(z,4)\},&\{(x,1),(u,4),(y,2),(v,4),(z,3)\},\\
\{(x,1),(u,4),(y,3),(v,2),(z,4)\},&\{(x,1),(u,4),(y,3),(v,4),(z,2)\}.
\end{eqnarray*}
But $\mathcal{H}_2$ has no canonical labelings. \end{example}

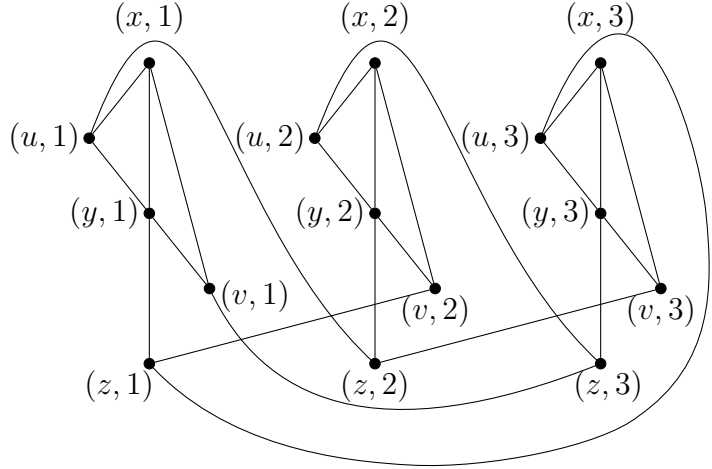
\begin{figure}[ht]
\begin{center}
\begin{tikzpicture}
[p/.style={circle,draw=black,fill=black,inner sep=1.4pt}]
\node (x1) at (0,4)[p]{}; \node (x2) at (3,4)[p]{}; \node (x3) at (6,4)[p]{};
\node (u1) at (-0.8,3)[p]{}; \node (u2) at (2.2,3)[p]{}; \node (u3) at (5.2,3)[p]{};
\node (y1) at (0,2)[p]{}; \node (y2) at (3,2)[p]{}; \node (y3) at (6,2)[p]{};
\node (v1) at (0.8,1)[p]{}; \node (v2) at (3.8,1)[p]{}; \node (v3) at (6.8,1)[p]{};
\node (z1) at (0,0)[p]{}; \node (z2) at (3,0)[p]{}; \node (z3) at (6,0)[p]{};
\draw (0,4.6) node {$(x,1)$}; \draw (3,4.6) node {$(x,2)$}; \draw (6,4.6) node {$(x,3)$};
\draw (-1.4,3) node {$(u,1)$}; \draw (1.6,3) node {$(u,2)$}; \draw (4.6,3) node {$(u,3)$};
\draw (-0.6,2) node {$(y,1)$}; \draw (2.4,2) node {$(y,2)$}; \draw (5.4,2) node {$(y,3)$};
\draw (1.4,0.9) node {$(v,1)$}; \draw (3.8,0.7) node {$(v,2)$}; \draw (6.8,0.7) node {$(v,3)$};
\draw (-0.4,-0.3) node {$(z,1)$}; \draw (3,-0.3) node {$(z,2)$}; \draw (6.1,-0.3) node {$(z,3)$};
\draw (x1)--(u1); \draw (x1)--(y1); \draw (x1)--(v1); \draw (u1)--(y1); \draw (y1)--(v1); \draw (y1)--(z1);
\draw (x2)--(u2); \draw (x2)--(y2); \draw (x2)--(v2); \draw (u2)--(y2); \draw (y2)--(v2); \draw (y2)--(z2);
\draw (x3)--(u3); \draw (x3)--(y3); \draw (x3)--(v3); \draw (u3)--(y3); \draw (y3)--(v3); \draw (y3)--(z3);
\draw (-0.8,3) .. controls (0.5,6.5) and (0.8,2) .. (3,0); \draw (2.2,3) .. controls (3.5,6.5) and (3.8,2) .. (6,0); \draw (5.2,3) .. controls (6.4,6.2) and (7.3,3) .. (7.4,2) .. controls (7.5,1) and (7.5,0) .. (6.5,-0.7) .. controls (6,-1.1) and (4.5,-1.4) .. (3.5,-1.35) .. controls (2.5,-1.3) and (1,-1.1) .. (0,0);
\draw (0.8,1).. controls (1.5,-0.5) and (3,-1.2) .. (6,0);
\draw (v2)--(z1); \draw (v3)--(z2);
\end{tikzpicture}
\caption{The subgraph $H_1[E_c]$.}
\label{figure 1}
\end{center}
\end{figure}

\begin{figure}[h!]
\begin{center}
\begin{tikzpicture}
[p/.style={circle,draw=black,fill=black,inner sep=1.4pt}]
\node (x1) at (0,4)[p]{}; \node (x2) at (3,4)[p]{}; \node (x3) at (6,4)[p]{}; \node (x4) at(9,4)[p]{};
\node (u1) at (-0.8,3)[p]{}; \node (u2) at (2.2,3)[p]{}; \node (u3) at (5.2,3)[p]{}; \node (u4) at(8.2,3)[p]{};
\node (y1) at (0,2)[p]{}; \node (y2) at (3,2)[p]{}; \node (y3) at (6,2)[p]{}; \node (y4) at(9,2)[p]{};
\node (v1) at (0.8,1)[p]{}; \node (v2) at (3.8,1)[p]{}; \node (v3) at (6.8,1)[p]{}; \node (v4) at(9.8,1)[p]{};
\node (z1) at (0,0)[p]{}; \node (z2) at (3,0)[p]{}; \node (z3) at (6,0)[p]{}; \node (z4) at(9,0)[p]{};
\draw (0,4.5) node {$(x,1)$}; \draw (3,4.5) node {$(x,2)$}; \draw (6,4.5) node {$(x,3)$}; \draw (9,4.5) node {$(x,4)$};
\draw (-1.4,3) node {$(u,1)$}; \draw (1.6,3) node {$(u,2)$}; \draw (4.6,3) node {$(u,3)$}; \draw (7.6,3) node {$(u,4)$};
\draw (-0.6,2) node {$(y,1)$}; \draw (2.4,2) node {$(y,2)$}; \draw (5.4,2) node {$(y,3)$}; \draw (8.4,2) node {$(y,4)$};
\draw (1.1,0.7) node {$(v,1)$}; \draw (4.1,0.7) node {$(v,2)$}; \draw (7.1,0.7) node {$(v,3)$}; \draw (10.1,0.7) node {$(v,4)$};
\draw (0,-0.3) node {$(z,1)$}; \draw (3,-0.3) node {$(z,2)$}; \draw (6,-0.3) node {$(z,3)$}; \draw (9,-0.3) node {$(z,4)$};
\draw (x1)--(u1); \draw (x1)--(y1); \draw (x1)--(v1); \draw (u1)--(y1); \draw (y1)--(v1); \draw (y1)--(z1); \draw (v1)--(z1);
\draw (x2)--(u2); \draw (x2)--(y2); \draw (x2)--(v2); \draw (u2)--(y2); \draw (y2)--(v2); \draw (y2)--(z2); \draw (v2)--(z2);
\draw (x3)--(u3); \draw (x3)--(y3); \draw (x3)--(v3); \draw (u3)--(y3); \draw (y3)--(v3); \draw (y3)--(z3); \draw (v3)--(z3);
\draw (x4)--(u4); \draw (x4)--(y4); \draw (x4)--(v4); \draw (u4)--(y4); \draw (y4)--(v4); \draw (y4)--(z4); \draw (v4)--(z4);
\draw (-0.8,3) .. controls (0.5,6.5) and (0.8,2) .. (3,0); \draw (2.2,3) .. controls (3.5,6.5) and (3.8,2) .. (6,0); \draw (5.2,3) .. controls (6.5,6.5) and (6.8,2) .. (9,0); \draw (8.2,3) .. controls (9.4,6.2) and (10.5,3) .. (10.6,2) .. controls (10.7,1) and (10.7,0) .. (9.5,-0.5) .. controls (7.5,-1) and (4.5,-0.9) .. (0,0);
\end{tikzpicture}
\caption{The subgraph $H_2[E_c]$.}
\label{figure 2}
\end{center}
\end{figure}
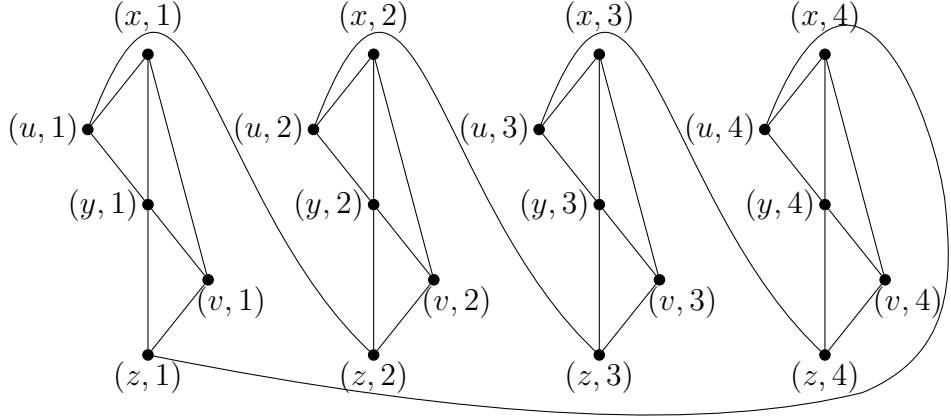

We note that in the above two examples $m=3$ or $4$, and we can't extend $m$ to larger one for the graph $W_4$.
So we consider whether Question \ref{p} has a positive answer for each graph. In fact, there are some types of graphs, for which Question \ref{p} has a positive answer.

\begin{proposition}[\cite{Kaul2021}]\label{8}
If $T$ is a tree and $\mathcal{H}=(L,H)$ is a full $m$-fold cover of $T$ where $m\geq 1$, then $\mathcal{H}$ has a canonical labeling.
\end{proposition}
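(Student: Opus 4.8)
The plan is to root the tree and propagate a naming of the vertices of $H$ outward from the root, using the perfect matchings along the tree edges to force each part into canonical form. The key point is that a tree has no cycles, so every edge is traversed exactly once in this propagation and no consistency conflict can arise; this is exactly where the tree hypothesis does the work.

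First I would fix an arbitrary root $r\in V(T)$ and name the $m$ vertices of $L(r)$ arbitrarily as $(r,1),\ldots,(r,m)$. Then I would order $V(T)$ so that every vertex appears after its parent, for instance a breadth-first ordering starting at $r$. Processing the vertices in this order, when I reach a non-root vertex $v$ whose parent $u$ has already been named, I would use the matching $E_H(L(u),L(v))$ to define the names in $L(v)$: since $\mathcal{H}$ is full, this matching is a perfect matching, so for each $j\in[m]$ there is a unique vertex of $L(v)$ matched to $(u,j)$, and I would call it $(v,j)$. Because the matching is perfect, $j\mapsto(v,j)$ is a bijection, so $L(v)=\{(v,j):j\in[m]\}$, and by construction $(u,j)(v,j)\in E(H)$ for every $j\in[m]$.

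Finally I would check that this naming is a canonical labeling. Every edge of $T$ is a parent-child edge of the rooted tree, and for each such edge the required matching condition $(u,j)(v,j)\in E(H)$ holds by the step above; moreover each vertex other than $r$ has a unique parent, so each part $L(v)$ is named exactly once and the procedure is well defined. The step deserving the most care is precisely this well-definedness, and it is where the tree structure is essential: in a graph containing a cycle the labels forced along one path from the root could clash with those forced along a second path to the same vertex, whereas in a tree the unique root-to-vertex path rules out any such clash. Hence $\mathcal{H}$ admits a canonical labeling, completing the proof.
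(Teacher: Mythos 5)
Your proof is correct; the paper itself gives no proof of this proposition, citing it from Kaul and Mudrock, whose argument is an induction on $|V(T)|$ that peels off a leaf and extends the canonical labeling along the perfect matching to its neighbor --- essentially the same propagation mechanism as your rooted BFS version, just phrased inductively. Your identification of where the tree hypothesis is used (unique root-to-vertex paths preventing label clashes) is exactly the right point of emphasis.
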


In the following, we find two more examples to affirm Question \ref{p}.

\begin{lemma}[\cite{Kaul2021}]\label{9}
Let $G$ be a graph with $e=uv\in E(G)$. For each $(i,j)\in [m]\times [m]$, let $C_m^{(i,j)}$ be the set of proper $m$-coloring of $G-\{e\}$ that color $u$ with $i$ and $v$ with $j$. Then,

\noindent$(i)$ there is an $r\in \mathbb{N}$ such that  $\lvert C_m^{(i,i)}\rvert=r$ for each $i\in[m]$.

\noindent$(ii)$	there is a $t\in \mathbb{N}$ such that  $\lvert C_m^{(i,j)}\rvert=t$ whenever $i\neq j$ and $i,j\in[m]$.\\Consequently, $mr=P(G-\{e\},m)-P(G,m)$ and $m(m-1)t=P(G,m)$.
\end{lemma}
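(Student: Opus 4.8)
The plan is to exploit the invariance of proper colorings under permutation of the color set $[m]$. The symmetric group $S_m$ acts on colorings $c:V(G)\to[m]$ by $\sigma\cdot c=\sigma\circ c$, and since each $\sigma$ is a bijection of $[m]$, the coloring $\sigma\circ c$ is a proper $m$-coloring of $G-\{e\}$ exactly when $c$ is; thus $S_m$ permutes the proper $m$-colorings of $G-\{e\}$. Because $(\sigma\circ c)(u)=\sigma(c(u))$ and $(\sigma\circ c)(v)=\sigma(c(v))$, the map $c\mapsto\sigma\circ c$ sends $C_m^{(i,j)}$ into $C_m^{(\sigma(i),\sigma(j))}$. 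This single mechanism drives both parts of the statement.

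For part $(i)$, given $i,i'\in[m]$ I would take $\sigma$ to be the transposition exchanging $i$ and $i'$. Then $c\mapsto\sigma\circ c$ maps $C_m^{(i,i)}$ into $C_m^{(i',i')}$, and applying the same transposition again recovers $c$, so the map is an involutive bijection. Hence $\lvert C_m^{(i,i)}\rvert$ is independent of $i$; call the common value $r$. For part $(ii)$, given ordered pairs $(i,j)$ and $(i',j')$ with $i\neq j$ and $i'\neq j'$, I would choose a permutation $\sigma$ with $\sigma(i)=i'$ and $\sigma(j)=j'$; such a $\sigma$ exists precisely because each pair consists of two distinct colors, so the partial assignment on $\{i,j\}$ extends to a bijection of $[m]$. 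Then $c\mapsto\sigma\circ c$ is a bijection $C_m^{(i,j)}\to C_m^{(i',j')}$ with inverse $c'\mapsto\sigma^{-1}\circ c'$, so $\lvert C_m^{(i,j)}\rvert$ is constant over all off-diagonal pairs; call it $t$.

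For the consequences, I would note that the classes $\{C_m^{(i,j)}:(i,j)\in[m]\times[m]\}$ partition the proper $m$-colorings of $G-\{e\}$ according to the ordered pair of colors received by $u$ and $v$, so $P(G-\{e\},m)=\sum_i\lvert C_m^{(i,i)}\rvert+\sum_{i\neq j}\lvert C_m^{(i,j)}\rvert=mr+m(m-1)t$. A proper coloring of $G-\{e\}$ is a proper coloring of $G$ if and only if $u$ and $v$ receive distinct colors, i.e. it lies in an off-diagonal class; hence $P(G,m)=\sum_{i\neq j}\lvert C_m^{(i,j)}\rvert=m(m-1)t$, and the colorings counted by $P(G-\{e\},m)$ but not by $P(G,m)$ are exactly those in the diagonal classes, giving $P(G-\{e\},m)-P(G,m)=\sum_i\lvert C_m^{(i,i)}\rvert=mr$.

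The argument has no genuine obstacle; the only points requiring care are verifying that $\sigma\circ c$ is again proper and that the displayed maps are mutually inverse bijections, together with the observation in part $(ii)$ that the required permutation exists precisely because each of the two pairs has distinct coordinates. The two counting identities then follow immediately, since deleting $e$ and re-imposing the constraint $c(u)\neq c(v)$ is exactly what distinguishes $G$ from $G-\{e\}$.
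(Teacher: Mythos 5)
Your proof is correct and is essentially the standard argument: the paper itself states Lemma~\ref{9} without proof, citing Kaul and Mudrock \cite{Kaul2021}, and the proof there is the same color-permutation symmetry you use (a permutation $\sigma$ of $[m]$ preserves properness and carries $C_m^{(i,j)}$ bijectively onto $C_m^{(\sigma(i),\sigma(j))}$, with a transposition handling the diagonal case and any extension of $i\mapsto i'$, $j\mapsto j'$ handling the off-diagonal case), followed by the same partition of the proper colorings of $G-\{e\}$ by the ordered pair $(c(u),c(v))$ to get $mr=P(G-\{e\},m)-P(G,m)$ and $m(m-1)t=P(G,m)$. No gaps.
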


From Lemma \ref{9} and the definition of canonical labeling, we obtain the following lemma.

\begin{lemma}\label{10}
Let $G$ be a graph and $\mathcal{H}=(L,H)$ be a full $m$-fold cover of $G$ with $m\geq 2$. Suppose $e\in E(G)$ and $e=uv$. Let $H'=H-E_H(L(u),L(v))$ so that $\mathcal{H'}=(L,H')$ is a full $m$-fold cover of $G-\{e\}$. For each $(i,j)\in [m]\times [m]$, let $\mathcal{H'}_{(i,j)}$ be the set of $\mathcal{H'}$-coloring that contain $(u,i)$ and $(v,j)$. If $\mathcal{H'}$ has a canonical labeling, then we have
\\$(i)$ when $i=j$, $$\lvert \mathcal{H'}_{(i,j)}\rvert=\frac{P(G-\{e\},m)-P(G,m)}{m};$$
\\$(ii)$ when $i\neq j$, $$\lvert \mathcal{H'}_{(i,j)}\rvert=\frac{P(G,m)}{m(m-1)}.$$
Furthermore, suppose $P=\{(i,j): (u,i)(v,j)\in E_H(L(u),L(v))\}$, then
$$P_{DP}(G,\mathcal{H})=P_{DP}(G',\mathcal{H'})-\sum_{(i,j)\in P}\lvert \mathcal{H'}_{(i,j)}\rvert.$$
\end{lemma}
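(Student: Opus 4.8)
The plan is to use the canonical labeling of $\mathcal{H'}$ to turn $\mathcal{H'}$-colorings of $G-\{e\}$ into ordinary proper $m$-colorings, and then read off $(i)$ and $(ii)$ directly from Lemma \ref{9}. First I would fix a canonical labeling of $\mathcal{H'}$ once and for all, naming the vertices of $H'$ (equivalently of $H$, since they share a vertex set) so that $L(x)=\{(x,j):j\in[m]\}$ for every $x\in V(G)$ and so that, whenever $xy\in E(G-\{e\})$, the only cross-edges of $H'$ between $L(x)$ and $L(y)$ are the parallel edges $(x,k)(y,k)$. Since each $H'[L(x)]$ is complete, any $\mathcal{H'}$-coloring, being an independent set of size $|V(G)|$, meets every part in exactly one vertex, and so is a transversal $\{(x,c(x)):x\in V(G)\}$. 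Under the canonical labeling, independence across an edge $xy\in E(G-\{e\})$ is equivalent to $c(x)\neq c(y)$, so the assignment $I\mapsto c$ is a bijection between $\mathcal{H'}$-colorings of $G-\{e\}$ and proper $m$-colorings of $G-\{e\}$; in particular $P_{DP}(G-\{e\},\mathcal{H'})=P(G-\{e\},m)$.

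Under this bijection, the family $\mathcal{H'}_{(i,j)}$ corresponds precisely to the set $C_m^{(i,j)}$ of proper $m$-colorings of $G-\{e\}$ coloring $u$ with $i$ and $v$ with $j$, hence $|\mathcal{H'}_{(i,j)}|=|C_m^{(i,j)}|$. Parts $(i)$ and $(ii)$ then follow immediately from Lemma \ref{9}: for $i=j$ I substitute $mr=P(G-\{e\},m)-P(G,m)$ to get $|\mathcal{H'}_{(i,i)}|=r=(P(G-\{e\},m)-P(G,m))/m$, and for $i\neq j$ I substitute $m(m-1)t=P(G,m)$ to get $|\mathcal{H'}_{(i,j)}|=t=P(G,m)/(m(m-1))$.

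For the final identity I would compare $H$ and $H'=H-E_H(L(u),L(v))$ directly. Every $\mathcal{H}$-coloring of $G$ is in particular an $\mathcal{H'}$-coloring of $G-\{e\}$, since $H\supseteq H'$. Conversely, an $\mathcal{H'}$-coloring $I$ meeting $L(u)$ in $(u,i)$ and $L(v)$ in $(v,j)$ stays independent after the matching is added back exactly when $(u,i)(v,j)\notin E_H(L(u),L(v))$, that is, when $(i,j)\notin P$. Thus the $\mathcal{H}$-colorings of $G$ are precisely those $\mathcal{H'}$-colorings lying outside $\bigcup_{(i,j)\in P}\mathcal{H'}_{(i,j)}$; as the sets $\mathcal{H'}_{(i,j)}$ are pairwise disjoint (each $\mathcal{H'}$-coloring selects a unique pair $(i,j)$), counting gives the claimed identity with $G'=G-\{e\}$.

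The one point demanding care, and the main obstacle, is consistency of labeling: the indices $i,j$ used in $\mathcal{H'}_{(i,j)}$ and those defining $P$ must refer to the \emph{same} canonical naming of the vertices of $H$, so that the deleted matching edges and the color classes produced by the bijection are indexed compatibly. Once the canonical labeling is fixed at the outset and used for both objects, all three assertions are essentially bookkeeping layered on top of Lemma \ref{9}.
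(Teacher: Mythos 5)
Your proposal is correct and follows exactly the route the paper intends: the paper offers no separate proof of Lemma \ref{10}, stating only that it follows ``from Lemma \ref{9} and the definition of canonical labeling,'' which is precisely the bijection-plus-counting argument you spell out (canonical labeling identifies $\mathcal{H'}$-colorings with proper $m$-colorings of $G-\{e\}$, Lemma \ref{9} gives the sizes, and deleting-the-matching bookkeeping with the pairwise disjoint sets $\mathcal{H'}_{(i,j)}$ gives the final identity). Your closing remark about using one fixed canonical naming for both $\mathcal{H'}_{(i,j)}$ and $P$ is the right point of care and matches how the lemma is applied later in Theorems \ref{13} and \ref{15}.
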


\begin{lemma}[\cite{Kaul2021}]\label{11}
Let $G$ be a graph and $\mathcal{H}=(L,H)$ be a full $m$-fold cover of $G$ with $m\geq 3$. Suppose $\alpha_1$$\alpha_2$$\alpha_3$ is a path of length two in $G$ and $\alpha_1\alpha_3\notin E(G)$. Let $e_1=\alpha_1\alpha_2$, $e_2=\alpha_2\alpha_3$. Then, let $G_0=G-\{e_1, e_2\}$, $G_1=G-\{e_1\}$, $G_2=G-\{e_2\}$, and $G^*$ be the graph obtained from $G$ by adding an edge between $\alpha_1$ and $\alpha_3$. Let $H'=H-(E_H(L(\alpha_1), L(\alpha_2))\cup E_H(L(\alpha_2), L(\alpha_3)))$ so that $\mathcal{H'}=(L,H')$ is an $m$-fold cover of $G_0$. Suppose that $\mathcal{H'}$ has a canonical labeling. Let
\begin{eqnarray*}
&A_1&=P(G_0,m)-P(G,m),\\
&A_2&=P(G_0,m)-P(G_2,m)+\frac{1}{m-1}P(G,m),\\
&A_3&=P(G_0,m)-P(G_1,m)+\frac{1}{m-1}P(G,m),\\
&A_4&=\frac{1}{m-1}(P(G_1,m)+P(G_2,m)+P(G^*,m)-P(G,m)), and\\
&A_5&=\frac{1}{m-1}(P(G_1,m)+P(G_2,m)-\frac{1}{m-2}P(G^*,m)).\\
\end{eqnarray*} Then,
$$P_{DP}(G,\mathcal{H})\geq P(G_0,m)-\max\{A_1, A_2, A_3, A_4, A_5\}.$$ Moreover, there exists an $m$-fold cover of $G$, $\mathcal{H^*}$, such that $$P_{DP}(G,\mathcal{H^*})=P(G_0,m)-\max\{A_1, A_2, A_3, A_4, A_5\}.$$
\end{lemma}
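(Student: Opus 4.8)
The plan is to reduce everything to counting proper colorings of $G_0$ and then optimize over the two matchings. Since $\mathcal{H}'$ has a canonical labeling, the $\mathcal{H}'$-colorings of $G_0$ are in bijection with the proper $m$-colorings of $G_0$, so $P_{DP}(G_0,\mathcal{H}')=P(G_0,m)$. Writing the matchings $M_1=E_H(L(\alpha_1),L(\alpha_2))$ and $M_2=E_H(L(\alpha_2),L(\alpha_3))$ as permutations $\sigma_1,\sigma_2$ of $[m]$ (via the canonical names), an $\mathcal{H}'$-coloring coming from a proper coloring $c$ of $G_0$ is an $\mathcal{H}$-coloring of $G$ precisely when $\sigma_1(c(\alpha_1))\neq c(\alpha_2)$ and $\sigma_2(c(\alpha_2))\neq c(\alpha_3)$. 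Hence, letting $\mathcal{B}_i$ denote the set of proper colorings of $G_0$ violating the $i$-th condition, inclusion--exclusion gives $P_{DP}(G,\mathcal{H})=P(G_0,m)-|\mathcal{B}_1\cup\mathcal{B}_2|$, and it suffices to prove $|\mathcal{B}_1\cup\mathcal{B}_2|\le\max\{A_1,\dots,A_5\}$ for every choice of $\sigma_1,\sigma_2$, with equality for some choice.

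Next I would exploit the colour-symmetry underlying Lemma~\ref{9}. Because $\alpha_1,\alpha_2,\alpha_3$ are pairwise non-adjacent in $G_0$, the number $N(a,b,c)$ of proper colorings of $G_0$ with $c(\alpha_1)=a$, $c(\alpha_2)=b$, $c(\alpha_3)=c$ is invariant under a global permutation of the colours, so it depends only on the equality pattern of $(a,b,c)$. There are exactly five patterns, with counts I will call $p$ (all equal), $q_1$ ($a=b\neq c$), $q_2$ ($a=c\neq b$), $q_3$ ($b=c\neq a$), and $n$ (all distinct). Summing $N$ over the appropriate triples and comparing with $P(G_0),P(G_1),P(G_2),P(G),P(G^*)$ then expresses each of $p,q_1,q_2,q_3,n$ as an explicit linear combination of these chromatic polynomials (for instance $mp=P(G)-P(G_1)-P(G_2)+P(G_0)$, $m(m-1)q_1=P(G_1)-P(G)$, $m(m-1)q_2=P(G)-P(G^*)$, and $m(m-1)(m-2)n=P(G^*)$).

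With these identities in hand I would compute $|\mathcal{B}_1|$, $|\mathcal{B}_2|$, and $|\mathcal{B}_1\cap\mathcal{B}_2|$ in terms of four permutation statistics: $f_1=|\mathrm{Fix}(\sigma_1)|$, $f_2=|\mathrm{Fix}(\sigma_2)|$, $f_{12}=|\mathrm{Fix}(\sigma_2\sigma_1)|$, and $h=|\mathrm{Fix}(\sigma_1)\cap\mathrm{Fix}(\sigma_2)|$. Indeed $|\mathcal{B}_1|=\sum_a\sum_c N(a,\sigma_1(a),c)$ depends only on $f_1$, $|\mathcal{B}_2|$ only on $f_2$, and $|\mathcal{B}_1\cap\mathcal{B}_2|=\sum_a N(a,\sigma_1(a),\sigma_2\sigma_1(a))$ depends on all four through the equality pattern of $(a,\sigma_1(a),\sigma_2\sigma_1(a))$. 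Consequently $|\mathcal{B}_1\cup\mathcal{B}_2|$ is an affine function of $(f_1,f_2,f_{12},h)$ whose coefficients are built from $p,q_1,q_2,q_3,n$, hence from the five chromatic polynomials.

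It then remains to maximize this affine function over the values of $(f_1,f_2,f_{12},h)$ realizable by a pair of permutations, which is where the real work lies. I expect the maximum to be attained at a ``pure'' configuration in which each of $\sigma_1,\sigma_2$ is either the identity or fixed-point-free and, when both are fixed-point-free, $\sigma_2\sigma_1$ is in turn either the identity or fixed-point-free. This yields exactly five configurations, $(\mathrm{id},\mathrm{id})$, $(\mathrm{id},D)$, $(D,\mathrm{id})$, $(D,D^{-1})$, and $(D_1,D_2)$ with $D_2D_1$ a derangement; substituting the corresponding statistics into the affine formula produces precisely $A_1,A_2,A_3,A_4,A_5$ respectively (one checks, e.g., that $(\mathrm{id},\mathrm{id})$ gives $P(G_0,m)-P(G,m)=A_1$ while $(D,D^{-1})$ gives $A_4$). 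The main obstacle is to show rigorously that no intermediate configuration beats all five: since the four statistics are coupled (they arise from only two permutations), one cannot push each coordinate to an endpoint independently, and one must argue that the objective-maximizing extreme points of the discrete feasible region lie among these five. Finally, the hypothesis $m\ge 3$ guarantees that the needed permutations exist---most notably a pair of derangements $D_1,D_2$ with $D_2D_1$ again a derangement (for $A_5$; e.g. two equal cyclic shifts)---so the configuration attaining $\max\{A_1,\dots,A_5\}$ can be realized by an explicit cover $\mathcal{H}^*$, which simultaneously establishes the inequality and the ``moreover'' statement.
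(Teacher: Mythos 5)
Your setup and all of your counting identities are correct, and they essentially reconstruct Kaul--Mudrock's argument (note the paper itself does not prove Lemma~\ref{11}; it cites \cite{Kaul2021} and records the structure of that proof in Lemma~\ref{12}): the canonical labeling identifies $\mathcal{H}'$-colorings of $G_0$ with proper $m$-colorings, the global colour-symmetry gives the five pattern counts $p,q_1,q_2,q_3,n$, your linear relations expressing these in terms of $P(G_0,m),P(G_1,m),P(G_2,m),P(G,m),P(G^*,m)$ all check out, and the five ``pure'' configurations do evaluate to $A_1,\dots,A_5$, with $m\ge 3$ correctly invoked for realizing type $5$ (two equal cyclic shifts work). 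However, as written there is a genuine gap at exactly the point you flag yourself: you never prove that your affine objective in $(f_1,f_2,f_{12},h)$ is bounded over the realizable region by $\max\{A_1,\dots,A_5\}$, and that bound \emph{is} the content of the lemma, so ``I expect the maximum to be attained at a pure configuration'' leaves the inequality unestablished.

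The missing idea --- and the way the cited proof actually goes, as recorded in Lemma~\ref{12} --- is to regroup the bad colorings not by which of the two constraints they violate but by which vertex of $L(\alpha_2)$ they contain. The two perfect matchings make $H''$ a disjoint union of $m$ paths $(\alpha_1,i)(\alpha_2,\sigma_1(i))(\alpha_3,\sigma_2\sigma_1(i))$, one through each vertex of $L(\alpha_2)$; since every $\mathcal{H}'$-coloring contains exactly one vertex of $L(\alpha_2)$, it can meet an edge of at most one of these paths, so the bad set splits \emph{disjointly} over the $m$ paths, and your own pattern computations show that a path of type $q$ contributes exactly $A_q/m$. Hence $\lvert\mathcal{B}_1\cup\mathcal{B}_2\rvert=\frac{1}{m}\sum_{q=1}^{5}m_qA_q$ with $m_q\ge 0$ and $\sum_{q=1}^{5}m_q=m$: the objective is a \emph{convex combination} of $A_1,\dots,A_5$, so the bound $\le\max_q A_q$ is immediate, with equality precisely when all $m$ paths have a maximizing type --- no analysis of extreme points of the feasible region is needed. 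Indeed, within your own parametrization this is visible: $m_1=h$, $m_2=f_1-h$, $m_3=f_2-h$, $m_4=f_{12}-h$, $m_5=m-f_1-f_2-f_{12}+2h$ are automatically nonnegative, being counts of elements $a\in[m]$ realizing each equality pattern (for the last one, note it counts the $a$ with all three entries distinct). Once you observe this, your ``main obstacle'' --- the coupling of the four statistics --- evaporates, and your realizability constructions already supply the cover $\mathcal{H}^*$ attaining equality.
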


From Lemma \ref{11} and its proof in \cite{Kaul2021}, we get Lemma \ref{12}.

\begin{lemma}\label{12}
Under the condition of Lemma \ref{11}. Let $H''$ be the graph with $V(H'')=\bigcup_{i=1}^{3}L(\alpha_i)$ and $E(H'')=E_H(L(\alpha_1),L(\alpha_2))\cup E_H(L(\alpha_2),L(\alpha_3))$. Clearly $H''$ can be decomposed into $m$ vertex disjoint paths on three vertices. Take any one of $m$ paths, let it be $(\alpha_1,i)(\alpha_2,j)(\alpha_3,k)$ where $i,j,k\in [m]$, then we have five cases for $i,j,k$, that are (1) $i=j=k$, (2) $i=j$ and $j\neq k$, (3) $i\neq j$ and $j=k$, (4) $i\neq j$ and $i=k$, (5) $i$, $j$ ,$k$ are pairwise distinct.
Let $\mathcal{H'}_{(i,j,k)}$ be the set of $\mathcal{H'}$-coloring that contains at least one edge of the path $(\alpha_1,i)(\alpha_2,j)(\alpha_3,k)$. Then $\lvert \mathcal{H'}_{(i,j,k)}\rvert=A_q/m$ when $i,j,k$ satisfy case $q$, $(1\leq q\leq 5)$.
Furthermore, we suppose that for $q\in [5]$, there are $m_q$ paths of case $q$ in the $m$ paths. Then $\sum_{q=1}^{5}m_q=m$ and
$$P_{DP}(G,\mathcal{H})=P(G_0,m)-\frac{1}{m}\sum\limits_{q=1}^{5}m_qA_q.$$
\end{lemma}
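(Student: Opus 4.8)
The plan is to reduce $P_{DP}(G,\mathcal{H})$ to a count of proper colorings of $G_0$ via the canonical labeling, and then to evaluate the contribution of each of the $m$ paths case by case. First I would record the structural fact that, since $\mathcal{H}$ is a full cover, $E_H(L(\alpha_1),L(\alpha_2))$ and $E_H(L(\alpha_2),L(\alpha_3))$ are perfect matchings; hence every vertex of $L(\alpha_2)$ is the centre of exactly one path of $H''$, so $H''$ is the disjoint union of the $m$ paths $(\alpha_1,i)(\alpha_2,j)(\alpha_3,k)$. Since $H$ is obtained from $H'$ by adding back $E(H'')$, an $\mathcal{H}'$-coloring of $G_0$ is an $\mathcal{H}$-coloring of $G$ precisely when it contains no edge of $H''$. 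Because every edge of $H''$ meets $L(\alpha_2)$ and distinct paths use distinct vertices of $L(\alpha_2)$, the sets $\mathcal{H}'_{(i,j,k)}$ attached to different paths are pairwise disjoint, and their union is exactly the family of $\mathcal{H}'$-colorings that contain an edge of $H''$. Thus $P_{DP}(G,\mathcal{H})=P_{DP}(G_0,\mathcal{H}')-\sum_{\mathrm{paths}}\lvert\mathcal{H}'_{(i,j,k)}\rvert$, and as $\mathcal{H}'$ has a canonical labeling $P_{DP}(G_0,\mathcal{H}')=P(G_0,m)$; grouping paths by case gives $\sum_{q=1}^{5}m_q=m$ and the displayed formula, provided the evaluation $\lvert\mathcal{H}'_{(i,j,k)}\rvert=A_q/m$ is established.

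For that evaluation I would apply inclusion--exclusion to the two edges of a path: $\lvert\mathcal{H}'_{(i,j,k)}\rvert$ equals the number of $\mathcal{H}'$-colorings containing $(\alpha_1,i),(\alpha_2,j)$, plus those containing $(\alpha_2,j),(\alpha_3,k)$, minus those containing all three. Using the canonical labeling, these are exactly the numbers of proper $m$-colorings of $G_0$ with the corresponding colors prescribed on $\{\alpha_1,\alpha_2\}$, on $\{\alpha_2,\alpha_3\}$, and on $\{\alpha_1,\alpha_2,\alpha_3\}$. The two pairwise counts are handled by Lemma \ref{9}: applying it to $G_2$ with edge $e_1$ (so $G_2-e_1=G_0$) shows the count with $c(\alpha_1)=i,c(\alpha_2)=j$ equals $(P(G_0,m)-P(G_2,m))/m$ when $i=j$ and $P(G_2,m)/(m(m-1))$ when $i\neq j$; applying it to $G_1$ with edge $e_2$ gives the analogous values involving $P(G_1,m)$ for $c(\alpha_2)=j,c(\alpha_3)=k$.

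The heart of the argument, and the step I expect to be the main obstacle, is the triple count $N(i,j,k)$ of colorings of $G_0$ with all three colors prescribed. Since $\alpha_1,\alpha_2,\alpha_3$ are pairwise non-adjacent in $G_0$, simultaneously permuting colors shows $N(i,j,k)$ depends only on the equality pattern of $(i,j,k)$, giving five values $n_1,\dots,n_5$ indexed by the five cases. I would pin these down by writing the relevant chromatic polynomials as pattern sums over colorings of $G_0$: restricting to $c(\alpha_2)\neq c(\alpha_3)$ gives $P(G_1,m)$, to $c(\alpha_1)\neq c(\alpha_2)$ gives $P(G_2,m)$, to both inequalities gives $P(G,m)$, to all three colors distinct gives $P(G^*,m)$, and the unrestricted sum gives $P(G_0,m)$. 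Solving this linear system yields $n_5=P(G^*,m)/(m(m-1)(m-2))$, $n_4=(P(G,m)-P(G^*,m))/(m(m-1))$, $n_2=(P(G_1,m)-P(G,m))/(m(m-1))$, $n_3=(P(G_2,m)-P(G,m))/(m(m-1))$, and $n_1=(P(G_0,m)-P(G_1,m)-P(G_2,m)+P(G,m))/m$. Substituting the pairwise counts and the matching $n_q$ into the inclusion--exclusion expression and simplifying then reproduces $A_q/m$ in each of the five cases, completing the proof. The final substitution is routine but must be verified in all five cases; the genuine obstacle is setting up the color-symmetry reduction and the five counting identities correctly, as these are precisely the facts implicitly underlying the proof of Lemma \ref{11}.
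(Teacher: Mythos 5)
Your proposal is correct: the decomposition of $H''$ into $m$ vertex-disjoint paths centered on $L(\alpha_2)$, the disjointness of the sets $\mathcal{H'}_{(i,j,k)}$ (since any blocked edge involves the unique chosen vertex of $L(\alpha_2)$), the inclusion--exclusion over the two edges of each path combined with Lemma \ref{9}, and the five pattern counts $n_1,\dots,n_5$ extracted from $P(G_0,m)$, $P(G_1,m)$, $P(G_2,m)$, $P(G,m)$, $P(G^*,m)$ do reproduce $\lvert \mathcal{H'}_{(i,j,k)}\rvert=A_q/m$ in all five cases. This is essentially the same approach the paper relies on: it gives no proof of its own, deriving the lemma directly from the proof of Lemma \ref{11} in \cite{Kaul2021}, which performs exactly this per-path counting.
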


\begin{theorem}\label{13}
Let $G$ be a unicyclic graph with $n$ vertices containing a cycle $C$ on $g$ vertices where $g\geq 3$ and $\mathcal{H}=(L,H)$ be a full $m$-fold cover of $G$. For each $m\geq 2$, if $P_{DP}(G, \mathcal{H})=P(G,m)$, then $\mathcal{H}$ has a canonical labeling.
\end{theorem}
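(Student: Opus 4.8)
The plan is to collapse the entire cover down to a single permutation recorded on one cycle edge, and then read that permutation off from the hypothesis $P_{DP}(G,\mathcal{H})=P(G,m)$. Since $G$ is unicyclic, I would first pick any edge $e=uv$ of the cycle $C$; then $T:=G-e$ is a spanning tree of $G$, and $\mathcal{H}'=(L,H')$ with $H'=H-E_H(L(u),L(v))$ is a full $m$-fold cover of $T$. By Proposition \ref{8}, $\mathcal{H}'$ has a canonical labeling, so I may name the vertices of $H$ so that $(x,j)(y,j)\in E(H)$ for every tree edge $xy\in E(T)$ and every $j\in[m]$. With this naming the matching $E_H(L(u),L(v))$ on the remaining edge $e$ is recorded by a single permutation $\sigma$ of $[m]$, namely $(u,i)(v,\sigma(i))\in E(H)$ for all $i$. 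I would then observe that $\mathcal{H}$ has a canonical labeling if and only if $\sigma=\mathrm{id}$: any relabeling preserving the tree-canonical form must apply one common permutation $\tau$ to all parts $L(x)$ (because $T$ is connected and spanning), and such a $\tau$ replaces $\sigma$ by $\tau\sigma\tau^{-1}$, which is the identity precisely when $\sigma$ is. Thus it suffices to prove $\sigma=\mathrm{id}$.

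Next I would count $\mathcal{H}$-colorings with Lemma \ref{10}, applied to the edge $e$. Because $\mathcal{H}'$ is canonical, that lemma gives $\lvert\mathcal{H}'_{(i,i)}\rvert=r:=(P(T,m)-P(G,m))/m$ for every $i$ and $\lvert\mathcal{H}'_{(i,j)}\rvert=t:=P(G,m)/(m(m-1))$ for all $i\neq j$, together with
$$P_{DP}(G,\mathcal{H})=P_{DP}(T,\mathcal{H}')-\sum_{i\in[m]}\lvert\mathcal{H}'_{(i,\sigma(i))}\rvert.$$
Since $\mathcal{H}'$ is canonical, $P_{DP}(T,\mathcal{H}')=P(T,m)$. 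Writing $k$ for the number of fixed points of $\sigma$, the indices $i$ with $\sigma(i)=i$ contribute $r$ each and the remaining $m-k$ indices contribute $t$ each, so
$$P_{DP}(G,\mathcal{H})=P(T,m)-kr-(m-k)t.$$

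Finally I would impose the hypothesis. By Lemma \ref{9} one has $P(G,m)=P(T,m)-mr$, so setting $P_{DP}(G,\mathcal{H})=P(G,m)$ and simplifying gives $(m-k)(t-r)=0$. The hard part will be checking that $t\neq r$, and this is exactly where the arithmetic specific to unicyclic graphs enters. Using $P(T,m)=m(m-1)^{n-1}$ (the chromatic polynomial of an $n$-vertex tree) and $P(G,m)=(m-1)^n+(-1)^g(m-1)^{n-g+1}$ from Lemma \ref{16}, a short computation yields
$$r-t=(-1)^{g+1}(m-1)^{n-g},$$
which is nonzero because $m\geq 2$ and $n\geq g$ force $(m-1)^{n-g}\geq 1$. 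Hence $m-k=0$, that is $\sigma=\mathrm{id}$, and therefore $\mathcal{H}$ has a canonical labeling. I expect the evaluation of $r-t$ to be the only genuinely delicate point; the remainder is the reduction to a single permutation together with a direct application of Lemmas \ref{9} and \ref{10}.
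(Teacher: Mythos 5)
Your proposal is correct and takes essentially the same route as the paper's proof: delete one cycle edge to obtain a spanning tree, invoke Proposition \ref{8} to put the tree part of the cover in canonical form, and then count via Lemmas \ref{9} and \ref{10}; your fixed-point count $k$ is exactly $m-t$ in the paper's notation, and your identity $r-t=(-1)^{g+1}(m-1)^{n-g}$ is the same arithmetic the paper carries out when it derives $P_{DP}(G,\mathcal{H})=(m-1)^n+(-1)^g(m-t-1)(m-1)^{n-g}$ and concludes $t=0$. Your only addition is the explicit conjugation argument showing a canonical labeling forces $\sigma=\mathrm{id}$, a point the paper leaves implicit.
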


\begin{proof}
Suppose $e\in E(C)$ and $e=uv$. Let $G'=G-\{e\}$ and $\mathcal{H'}=(L,H')$ where $H'=H-E_H(L(u),L(v))$. Then, $G'$ is a tree and $\mathcal{H'}$ is a full $m$-fold cover of $G'$.  Proposition \ref{8} implies that $\mathcal{H'}$ has a canonical labeling. Let $P=\{(i,j): (u,i)(v,j)\in E_H(L(u),L(v))\}$, $P_1=\{(i,j)\in P\ \mbox{and}\ i=j\}$, and $P_2=\{(i,j)\in P \ \mbox{and}\ i\neq j\}$. Suppose that $|P_2|=t$, then $|P_1|=m-t$. By Lemma \ref{10}, we have that for $m\geq 2$
\begin{eqnarray*}
P_{DP}(G,\mathcal{H})&=&P_{DP}(G',\mathcal{H'})-\sum_{(i,j)\in P}\lvert \mathcal{H'}_{(i,j)}\rvert\\
&=&P(G',m)-t\frac{P(G,m)}{m(m-1)}-(m-t)\frac{P(G',m)-P(G,m)}{m}.
\end{eqnarray*}
For $P(G,m)=(m-1)^{n}+(-1)^g(m-1)^{n-g+1}$ and $P(G',m)=m(m-1)^{n-1}$, we have
$$P_{DP}(G,\mathcal{H})=(m-1)^n+(-1)^g(m-t-1)(m-1)^{n-g}.$$
If $P_{DP}(G, \mathcal{H})=P(G,m)$, then $t=0$ which implies $\mathcal{H}$ is a canonical labeling.
\end{proof}

\begin{theorem}\label{15}
Let $G=\theta(r,s,t)$ $(r\geq 1, s,t\geq 2)$ and $\mathcal{H}=(L,H)$ be a full $m$-fold cover of $G$. For each $m\geq 3$, if $P_{DP}(G,\mathcal{H})=P(G,m)$, then $\mathcal{H}$ has a canonical labeling.
\end{theorem}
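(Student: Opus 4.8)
The plan is to bring $G=\theta(r,s,t)$ into the exact framework of Lemma \ref{12} by choosing the length-two path to be centered at a branch vertex. Let $a$ be one of the two vertices of degree $3$ in $G$, and let $\alpha_1,\alpha_3$ be the neighbors of $a$ lying on the $s$-path and the $t$-path respectively; set $\alpha_2=a$, $e_1=\alpha_1\alpha_2$ and $e_2=\alpha_2\alpha_3$. Since $s,t\geq 2$, both $\alpha_1$ and $\alpha_3$ are internal degree-two vertices of their paths, hence distinct from the other branch vertex $b$ and mutually nonadjacent; so $\alpha_1\alpha_2\alpha_3$ is a path of length two with $\alpha_1\alpha_3\notin E(G)$, exactly as Lemmas \ref{11} and \ref{12} require. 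Note $m\geq 3$ matches the hypothesis of those lemmas.

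First I would identify the four auxiliary graphs. Deleting both $e_1$ and $e_2$ leaves $a$ attached only along the $r$-path, so $G_0=G-\{e_1,e_2\}$ is connected with $n-1$ edges, i.e.\ a spanning tree of $G$; by Proposition \ref{8} the associated cover $\mathcal{H}'$ of $G_0$ then has a canonical labeling, which is precisely what makes Lemma \ref{12} applicable. The single-edge deletions give unicyclic graphs: $G_1=G-e_1$ has unique cycle $C_{r+t}$ and $G_2=G-e_2$ has unique cycle $C_{r+s}$, while $G^*=G+\alpha_1\alpha_3$ is the theta graph with the extra edge $\alpha_1\alpha_3$, creating a triangle on $\{a,\alpha_1,\alpha_3\}$. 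This mirrors the reduction used for the unicyclic case in Theorem \ref{13}, but with two coordinated edge deletions instead of one.

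Next I would compute $A_1,\dots,A_5$ explicitly as functions of $m$. The values $P(G_0,m)=m(m-1)^{n-1}$, $P(G,m)$ from Lemma \ref{16}$(iv)$, and $P(G_1,m),P(G_2,m)$ from Lemma \ref{16}$(ii)$ (with cycle lengths $r+t$ and $r+s$) are available directly; the only genuinely new computation is $P(G^*,m)$, which I would obtain by deletion--contraction on $\alpha_1\alpha_3$, reducing it to chromatic polynomials of theta and unicyclic graphs already recorded in Lemma \ref{16}. Substituting into the definitions of $A_1,\dots,A_5$ yields closed forms whose leading term is $(m-1)^n$ with lower-order corrections carrying signs determined by the parities of $r,s,t$.

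Finally, Lemma \ref{12} gives $P_{DP}(G,\mathcal{H})=P(G_0,m)-\tfrac1m\sum_{q=1}^5 m_qA_q$ with $m_q\geq 0$ and $\sum_{q=1}^5 m_q=m$, whereas a canonical labeling corresponds exactly to $m_1=m$ and gives $P(G,m)=P(G_0,m)-A_1$. Imposing $P_{DP}(G,\mathcal{H})=P(G,m)$ therefore collapses to $\sum_{q=2}^5 m_q(A_q-A_1)=0$. The hard part—and the step I expect to be the main obstacle—is to show that for every $m\geq 3$ the four differences $A_q-A_1$ with $2\leq q\leq 5$ are all nonzero and of one common sign; granting this, nonnegativity of the $m_q$ forces $m_q=0$ for $q\geq 2$, hence $m_1=m$. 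This sign comparison is where the parities of $r,s,t$ (equivalently of the cycle lengths $r+s$ and $r+t$) enter, so I anticipate a short case split together with careful estimates of the correction terms, with $A_4,A_5$ the delicate ones because of the nonstandard $P(G^*,m)$. Once $m_1=m$ is forced, both matchings across $e_1$ and $e_2$ are diagonal in the canonical labeling of $\mathcal{H}'$ on $G_0$, so $\mathcal{H}$ itself has a canonical labeling, completing the proof.
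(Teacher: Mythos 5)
Your setup coincides exactly with the paper's: the same path $\alpha_1\alpha_2\alpha_3$ centered at a branch vertex with $\alpha_1,\alpha_3$ on the $s$- and $t$-paths, the same appeal to Proposition \ref{8} to get a canonical labeling of $\mathcal{H}'$ on the spanning tree $G_0$, and the same reduction via Lemmas \ref{11} and \ref{12} to showing that $\sum_{q=2}^{5}m_q(A_q-A_1)=0$ with $m_q\geq 0$ and $\sum_{q=1}^{5}m_q=m$ forces $m_2=m_3=m_4=m_5=0$ (your algebra here, and your closing remark that $m_1=m$ yields a canonical labeling of $\mathcal{H}$, are both correct). The gap is precisely the step you flagged as the main obstacle: the claim that the four differences $A_q-A_1$ $(2\leq q\leq 5)$ are nonzero and of one common sign is \emph{false} whenever the two cycles $C_{r+s}$ and $C_{r+t}$ have opposite parities. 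Concretely, take $G=\theta(1,2,3)$ and $m=3$: then $P(G_0,3)=48$, $P(G,3)=18$, $P(G_1,3)=(m-1)^5+(m-1)^2=36$ (the surviving cycle $C_{r+t}=C_4$ is even), $P(G_2,3)=(m-1)^5-(m-1)^3=24$ ($C_{r+s}=C_3$ is odd), and $P(G^*,3)=12$, giving $A_1=30$, $A_2=33$, $A_3=21$, $A_4=27$, $A_5=24$. Thus $A_2-A_1=3>0$ while $A_3-A_1=-9<0$, so the signs are mixed and nonnegativity of the $m_q$ no longer forces the relation to be trivial; note also that $\max_q A_q=A_2>A_1$ here, so one cannot even retreat to the inequality in Lemma \ref{11}. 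Your sign claim holds only in the pure-parity cases (e.g.\ $\theta(1,2,2)$ or $\theta(2,2,2)$), and the mixed-parity theta graphs are exactly where the theorem still needs proving.

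What the paper does instead of a sign comparison is a finer arithmetic analysis. Setting $u=m-1$ and clearing denominators, the hypothesis $P_{DP}(G,\mathcal{H})=P(G,m)$ becomes a single integer equation $h(u)=0$ whose terms are grouped by powers of $u$ with coefficients linear in $m_2,\dots,m_5$ (two forms, $h_1$ and $h_2$, because $P(G^*,m)$ has a different expression when $s=t=2$). For the fixed integer $u$, divisibility of the constant term forces $m_2+m_3+m_4+2m_5\in\{0,u,2u\}$; combined with the bounds $0\leq m_q\leq u+1$, $\sum_q m_q=m$, an examination of the top-degree coefficient, a case split on $r=1$ versus $r\geq 2$, and combinatorial exclusions about the decomposition of $H''$ into $m$ paths — e.g.\ one cannot have exactly one path $(\alpha_1,i)(\alpha_2,j)(\alpha_3,k)$ with $i,j,k$ pairwise distinct, nor exactly two such paths when $m=3$ — the only surviving possibility is $m_1=m$. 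These mod-$u$ and matching-structure constraints are what your proposal is missing; without them the final step of your argument does not go through, even though everything up to the equation $\sum_{q=2}^{5}m_q(A_q-A_1)=0$ is sound and identical to the paper's route.
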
	

\begin{proof}
Let $\alpha_2$ be one of the common ends of the three paths of $G$, let $\alpha_1$ and $\alpha_3$ be the vertices in the path of length $s$ and $t$ respectively, that are adjacent to $\alpha_2$. Clearly $\alpha_1\alpha_2\alpha_3$ is a path of length two in $G$ and $\alpha_1\alpha_3\not\in E(G)$.
We define $e_1, e_2, G_0, G_1, G_2, G^{*}, \mathcal{H'}, H''$ and $m_q (1\leq q\leq 5)$ as they are defined in the statement of Lemmas \ref{11} and \ref{12}.

Then $G_0$ is a tree and $\mathcal{H'}=(L,H')$ is a full $m$-fold cover of $G_0$. By Proposition \ref{8}, $\mathcal{H'}$ has a canonical labeling.  So we can name the vertices of $H$ so that $L(x)=\{(x,j): j\in [m]\}$ for each $x\in V(H)$ and $(x,j)(y,j)\in E(H)$ for each $j\in [m]$ whenever $xy\in E(G_0)$.

By computing, we get that
\begin{equation}\label{e1}P(G_0,m)=m(m-1)^{r+s+t-2},\end{equation}
\begin{equation}\label{e2}P(G_1,m)=(m-1)^{r+s+t-1}+(-1)^{r+t}(m-1)^s,\end{equation}
\begin{equation}\label{e3}P(G_2,m)=(m-1)^{r+s+t-1}+(-1)^{r+s}(m-1)^t,\end{equation}
\begin{eqnarray}\label{e4}
P(G^*,m)=\left\{
\begin{aligned}
&P(G,m)-P(\theta(r+1,s-1,t-1),m), ~~~\mbox{when}~~ s\geq 3 ~\mbox{or}~t\geq 3;\\
&P(G,m)-P(C_{r+2},m), ~~~~~~~~~~~~~~~~~~~~~~~~~~~~~~\mbox{when}~~ s=t=2.
\end{aligned}
\right.
\end{eqnarray}

By Lemmas \ref{12} and \ref{11}, we have that
\begin{eqnarray}\label{e6}
P_{DP}(G,\mathcal{H})&=&P(G_0,m)-\frac{1}{m}\sum\limits_{q=1}^{5}m_qA_q\nonumber\\
&=&\frac{m-m_1-m_2-m_3}{m}P(G_0,m)+\frac{(m-1)m_1-m_2-m_3+m_4}{m(m-1)}P(G,m)\nonumber\\
&&+\frac{(m-1)m_3-m_4-m_5}{m(m-1)}P(G_1,m)+\frac{(m-1)m_2-m_4-m_5}{m(m-1)}P(G_2,m)\nonumber\\
&&+\frac{m_5-(m-2)m_4}{m(m-1)(m-2)}P(G^*,m).
\end{eqnarray}

For simplicity, we let $u=m-1$. Then combining equations \eqref{e1}-\eqref{e6} with Lemma \ref{16}$(iv)$, we have that
when $s\geq 3$ or $t\geq 3$,
\begin{eqnarray}\label{e7}
P_{DP}(G,\mathcal{H})&=&\frac{u^{r+s+t+1}+u^{r+s+t}+(-1)^{r+t}(u^{s+2}+u^{s+1})+(-1)^{s+t}(u^{r+2}+u^{r+1})}{(u+1)^2}\nonumber\\
&&+\frac{(-1)^{r+s}(u^{t+2}+u^{t+1})+(-1)^{r+t+1}(m_2+m_4+m_5)(u^{s+1}+u^s)}{(u+1)^2}\nonumber\\
&&+\frac{(-1)^{s+t+1}(m_2+m_3+m_5)(u^{r+1}+u^r)+(-1)^{r+s+1}(m_3+m_4+m_5)(u^{t+1}+u^t)}{(u+1)^2}\nonumber\\
&&+\frac{(-1)^{r+s+t}u^3+(-1)^{r+s+t+1}(m_2+m_3+m_4+m_5)u^2}{(u+1)^2}\nonumber\\
&&+\frac{(-1)^{r+s+t}(m_5-1)u+(-1)^{r+s+t}(m_2+m_3+m_4+2m_5)}{(u+1)^2};
\end{eqnarray}
when $s=t=2$,
\begin{eqnarray}\label{e77}
P_{DP}(G,\mathcal{H})&=&\frac{u^{r+6}-u^{r+4}+u^{r+3}-u^{r+1}-(m_2+m_3+m_5)(u^{r+2}-u^{r})+(-1)^r2u^5}{(u+1)^2(u-1)}\nonumber\\
&&+\frac{(-1)^{r+1}(m_2+m_3+2m_4+2m_5-1)u^4+(-1)^{r+1}(m_2+m_3+m_4+m_5+3)u^3}{(u+1)^2(u-1)}\nonumber\\
&&+\frac{(-1)^{r}(2m_2+2m_3+3m_4+4m_5-1)u^2+(-1)^r(m_2+m_3+m_4+m_5+1)u}{(u+1)^2(u-1)}\nonumber\\
&&+\frac{(-1)^{r+1}(m_2+m_3+m_4+2m_5)}{(u+1)^2(u-1)}
\end{eqnarray}
and
\begin{eqnarray}\label{e8}
P(G,m)&=&P(G,u+1)\nonumber\\
&=&\frac{u^{r+s+t}+(-1)^{s+t}u^{r+1}+(-1)^{r+t}u^{s+1}+(-1)^{r+s}u^{t+1}}{u+1}\nonumber\\
&&+\frac{(-1)^{r+s+t}u^2+(-1)^{r+s+t+1}u}{u+1}.
\end{eqnarray}	
Let $f_1(u)$, $f_2(u)$ and $g(u)$ be the numerator of the equations \eqref{e7}, \eqref{e77} and \eqref{e8} respectively.
Let $h_1(u)=f_1(u)-(u+1)g(u)$ and $h_2(u)=f_2(u)-(u^2-1)g(u)$.
Because $g(u)$ is a polynomial, $P_{DP}(G,\mathcal{H})=P(G,m)$ if and only if
$h_1(u)=0,$ when $s\geq 3$ or $t\geq 3$;  $h_2(u)=0$ when $s=t=2$. In the following, we will prove that if
$h_1(u)=0$ (or $h_2(u)=0$), then $m_1=m$, $m_2=m_3=m_4=m_5=0$, i.e.,$\mathcal{H}$ has a canonical labeling.
We discuss the two cases respectively.

\noindent{\bf Case 1.} $s\geq 3$ or $t\geq 3$.

In this case
\begin{eqnarray*}
h_1(u)&=&(-1)^{r+t+1}(m_2+m_4+m_5)(u^{s+1}+u^s)+(-1)^{s+t+1}(m_2+m_3+m_5)(u^{r+1}+u^r)\\
&&+(-1)^{r+s+1}(m_3+m_4+m_5)(u^{t+1}+u^t)+(-1)^{r+s+t+1}(m_2+m_3+m_4+m_5)u^2\\
&&+(-1)^{r+s+t}m_5u+(-1)^{r+s+t}(m_2+m_3+m_4+2m_5).
\end{eqnarray*}
If $h_1(u)$ is a zero polynomial, then each coefficient of the polynomial is zero.  We note that
$\sum_{q=1}^{5}m_q=m=u+1$ and $m_q\geq 0$ for each $q\in[5]$.

Firstly, we focus on the constant term of $h_1(u)$. If the constant term of $h_1(u)$ is zero, then $m_2+m_3+m_4+2m_5\equiv 0 ~(\bmod ~u)$, i.e., $m_2+m_3+m_4+2m_5=ku$. And $k\in \{0,1,2\}$, because $m_2+m_3+m_4+2m_5\in \left[0,2u+2\right]$. According to the value of $k$, we have the following three situations.

{\bf S1:} $k=0$, i.e., $m_2+m_3+m_4+2m_5=0$.  Then we have $m_2=m_3=m_4=m_5=0$ and $m_1=m$.

{\bf S2:} $k=1$,  i.e.,$m_2+m_3+m_4+2m_5=m-1$. Then we have $m_1=m_5+1$.

{\bf S3:} $k=2$,  i.e.,$m_2+m_3+m_4+2m_5=2m-2$. Then we have $m+m_1-m_5=2$ which implies $m_1=0$, $m_5=m-2$, $m_2+m_3+m_4=2$; or $m_1=1$, $m_5=m-1$, $m_2=m_3=m_4=0$.

Clearly, in situation S1, $h_1(u)=0$ and $\mathcal{H}$ has a canonical labeling. In the following, we will prove that in situations S2 and S3, $h_1(u)$ is not a zero polynomial. We focus on the coefficient of $u$ in $h_1(u)$  and discuss the following two subcases.

\noindent{\bf Subcase 1.1.} $r\geq 2.$

In this subcase, if the coefficient of $u$ in $h_1(u)$ is zero, then $m_5+1\equiv 0 ~(\bmod ~u)$ in situation S2, and $m_5+2\equiv 0 ~(\bmod ~u)$ in situation S3.

In situation S2, if $m_5+1\equiv 0 ~(\bmod ~u)$, then $m_5=u-1=m-2$, $m_1=m-1$, and $m_1+m_5=2m-3$. Because $\sum_{q=1}^{5}m_q=m$, we have $2m-3\leq m$, then $m\leq 3$. So $m_1=2, m_5=1$, but this will not happen, we can't have only one path $(\alpha_1,i)(\alpha_2,j)(\alpha_3,k)$ where $i,j,k$ are pairwise distinct.

In situation S3, if $m_5+2\equiv 0 ~(\bmod ~u)$, then $m_5=u-2=m-3$, this contradicts $m_5=m-2$ or $m_5=m-1$.

\noindent{\bf Subcase 1.2.} $r=1.$

In this subcase, if the coefficient of $u$ in $h_1(u)$ is zero, then $m_2+m_3+2m_5+1\equiv 0 ~(\bmod ~u)$ in situation S2, and $m_2+m_3+2m_5+2\equiv 0 ~(\bmod ~u)$ in situation S3.

In situation S2, if $m_2+m_3+2m_5+1\equiv 0 ~(\bmod ~u)$, then $m_2+m_3+2m_5=u-1$, combining this with $m_2+m_3+m_4+2m_5=u$, $m_4=1$ follows. But when $m_4=1$, the coefficient of the leading term is not zero.

In situation S3, $m_5=m-2$ or $m-1$.
If $m_5=m-2$ and $m_2+m_3+2m_5+2\equiv 0 ~(\bmod ~u)$, then $m_2+m_3+2m_5=2u-2$. Combining this with $m_2+m_3+m_4=2$, we have $m_2=m_3=0$ and $m_4=2$.
 But when $m_4=2$, the coefficient of the leading term is not zero. If $m_5=m-1$, then $m_2+m_3+2m_5=2u$,  $m_2+m_3+2m_5+2\neq 0 ~(\bmod ~u)$, otherwise $u=2$. But when $u=2$, we have $m_5=2$, $m_1=1$ and $m_2=m_3=m_4=0$. This will not happen, because we can't have two paths $(\alpha_1,i)(\alpha_2,j)(\alpha_3,k)$ where $i,j,k$ are pairwise distinct when $m=3$.

Hence, in Case 1,  if $h_1(u)=0$, then $m_1=m, m_2=m_3=m_4=m_5=0$, i.e., $\mathcal{H}$ has a canonical labeling.

\noindent{\bf Case 2. $s=t=2$.}

In this case
\begin{eqnarray*}
h_2(u)&=&(-m_2-m_3-m_5)u^{r+2}+(m_2+m_3+m_5)u^r+(-1)^{r+1}(m_2+m_3+2m_4+2m_5)u^4\\
&&+(-1)^{r+1}(m_2+m_3+m_4+m_5)u^3+(-1)^r(2m_2+2m_3+3m_4+4m_5)u^2\\
&&+(-1)^r(m_2+m_3+m_4+m_5)u+(-1)^{r+1}(m_2+m_3+m_4+2m_5).
\end{eqnarray*}

The proof is similar to that for Case 1. If the constant term of $h_2(u)$ is zero, then $m_2+m_3+m_4+2m_5\equiv 0 ~(\bmod ~u)$, which is the same with that in Case 1. So we have the same three situations S1, S2, S3 with that in Case 1. Clearly, in situation S1, $h_2(u)=0$ and $\mathcal{H}$ has a canonical labeling. In the following, we will prove that in situations S2 and S3, $h_2(u)$ is not a zero polynomial. We discuss the following two subcases.

\noindent{\bf Subcase 2.1.} $r\geq 2.$

In situation S2,
\begin{eqnarray*}&&(-1)^r(m_2+m_3+m_4+m_5)u+(-1)^{r+1}(m_2+m_3+m_4+2m_5)\\
&=&(-1)^{r}u^2+(-1)^{r+1}(m_5+1)u,\end{eqnarray*}
and in situation S3,
\begin{eqnarray*}&&(-1)^r(m_2+m_3+m_4+m_5)u+(-1)^{r+1}(m_2+m_3+m_4+2m_5)\\
&=&(-1)^{r}2u^2+(-1)^{r+1}(m_5+2)u.\end{eqnarray*}
If the coefficient of $u$ in $h_2(u)$ is zero, then $m_5+1\equiv 0 ~(\bmod ~u)$ in situation S2, and $m_5+2\equiv 0 ~(\bmod ~u)$ in situation S3, which are exactly the same with that in Subcase 1.1. So with the same argument in Subcase 1.1,  we obtain that in situations S2 and S3, $h_2(u)$ is not a zero polynomial in Subcase 2.1.

\noindent{\bf Subcase 2.2.} $r=1.$

If the coefficient of $u$ in $h_2(u)$ is zero, then $$(m_2+m_3+m_5)u-(m_2+m_3+m_4+m_5)u+ku=(k-m_4)u=0.$$
In situation S2, $k=1$, so $m_4=1$; in  situation S3, $k=2$, so $m_4=2$. But no matter $m_4$ is $1$ or $2$, the leading term of $h_2(u)$ will not be zero.

Hence,  in Case 2,  if $h_2(u)=0$, then $m_1=m, m_2=m_3=m_4=m_5=0$, i.e., $\mathcal{H}$ has a canonical labeling.

Summarizing Cases 1 and 2, the theorem is obtained.
\end{proof}

By Theorems \ref{13} and \ref{15}, we know that the answer of Question \ref{p} is yes for unicyclic graphs and theta graphs when $m\geq 2$, and $m\geq 3$ respectively. Whether the answer of Question \ref{p} is yes for all graphs is still wide open.

\section*{Declarations}

{\bf Conflict of interest}  The authors have no relevant financial or non-financial interests to disclose.

\noindent{\bf Data sharing}  Data sharing not applicable to this article as no datasets were generated or analysed during
the current study.

\section*{Acknowledgements}

The authors would like to thank a referee for valuable comments and suggestions.

\end{document}